\crefname{hypothesis}{Hypothesis}{Hypotheses}
\title{Second-order flows for approaching stationary points of a class of non-convex energies via convex-splitting schemes\thanks{Submitted to the editors DATE.
\funding{The work of HC and GD was supported by the NSFC grant 12001194. GD is also partially supported by the NSFC grant 12471402 and the NSF grant of Hunan Province 2024JJ5413. The work of WL was supported by the NSFC grant 12101252 and NUDT grant 202402-YJRC-XX-002. The work of HC and ZX was supported by the Key Project of Xiangjiang Laboratory 22XJ01013 and the NSFC grant 12171148.}}}
\author{Haifan Chen\thanks{MOE-LCSM, School of Mathematics and Statistics, Hunan Normal University, Changsha 410081, China 
  (\email{haifanchen@hunnu.edu.cn}, \email{ziqingxie@hunnu.edu.cn}).}
\and Guozhi Dong\thanks{School of Mathematics and Statistics, HNP-LAMA, Central South University, Changsha 410083, China 
  (\email{guozhi.dong@csu.edu.cn}).}
\and Jos\'e A. Iglesias\thanks{Department of Applied Mathematics, University of Twente, 7500 AE Enschede, The Netherlands 
  (\email{jose.iglesias@utwente.nl}).}
\and Wei Liu\thanks{Department of Mathematics, National University of Defense Technology, Changsha 410073, China 
  (\email{wl@nudt.edu.cn}).}
\and Ziqing Xie\footnotemark[2]}
\begin{document}

\maketitle

\begin{abstract}
This paper contributes to the exploration of a recently introduced computational paradigm known as second-order flows, which are characterized by novel dissipative hyperbolic partial differential equations extending accelerated gradient flows to energy functionals defined on Sobolev spaces, and exhibiting significant performance particularly for the minimization of non-convex energies. Our approach hinges upon convex-splitting schemes, a tool which is not only pivotal for clarifying the well-posedness of second-order flows, but also yields a versatile array of robust numerical schemes through temporal (and spatial) discretization. We prove the convergence to stationary points of such schemes in the semi-discrete setting. Further, we establish their convergence to time-continuous solutions as the timestep tends to zero. Finally, these algorithms undergo thorough testing and validation in approaching stationary points of representative non-convex variational models in scientific computing.
\end{abstract}

\begin{keywords}
non-convex functional,
second-order flow, 
dissipative hyperbolic equation, 
convex-splitting scheme,
stationary point,
Ginzburg-Landau energy, 
Landau-de Gennes model
\end{keywords}

\begin{MSCcodes}
35Q90, 35L71, 65M15, 65K10 
\end{MSCcodes}

\section{Introduction}
Second-order flow is a concept proposed in \cite{CheDonLiuXie23}, referring to certain dissipative second-order hyperbolic partial differential equations (PDEs), which serves as a compelling alternative to gradient flows for approaching minimizers of variational problems. (Non-convex) variational models and PDEs are central topics in scientific computing and applied mathematics, offering useful tools to describe a variety of complex phenomena across multiple disciplines. The current paper is motivated to develop robust and efficient numerical methods based on second-order flows for solutions of a class of non-convex variational problems, e.g., the Ginzburg-Landau energy in phase-field modelings and the Landau-de Gennes energy of the Q-tensor model for liquid crystals.

These variational models are grounded in a common mathematical framework. Let \( H \) be a Hilbert space of the vector-valued function \( \mathbf{u}:\Omega\subset \mathbb{R}^d\rightarrow\mathbb{R}^N (d=1,2,3)\), where \( \Omega\) is a bounded domain. The goal is to find a function \( \mathbf{u}_g \in H \) that minimizes an energy functional \( E \), formulated as
\begin{align}\label{eq:energy_minimization}
E_g := E(\mathbf{u}_g) = \min_{\mathbf{u} \in H} E(\mathbf{u}),
\end{align}
with the energy functional given by
\begin{align}\label{eq:energy}
E(\mathbf{u}) = \int_{\Omega} \left( \frac{1}{2} |\nabla \mathbf{u}|^2 + F(\mathbf{u}) \right) \, \mathrm{d} \mathbf{x},
\end{align}
where \( \nabla\) denotes the gradient operator, and \(|\nabla\mathbf{u}|^2 = \sum_{i=1}^{N} \sum_{j=1}^{d} \left( \frac{\partial u_i}{\partial x_j} \right)^2  \). This paper focuses on a typical nonlinear potential $F$ of the form 
\[ F(\mathbf{u})=\frac{\alpha}{4}|\mathbf{u}|^4-\frac{\beta}{2}|\mathbf{u}|^2+\gamma \] 
with constants \( \alpha, \beta, \gamma \) all positive. The associated Euler-Lagrange equation for this variational problem is
\begin{align}\label{eq:euler-lagrange}
-\Delta \mathbf{u} + f(\mathbf{u}) = 0 \quad \text{in }\Omega,
\end{align}
where \( \Delta \mathbf{u} \) represents the Laplacian of \( \mathbf{u} \), and \( f(\mathbf{u})= \alpha |\mathbf{u}|^2\mathbf{u} - \beta \mathbf{u} \) is derived from the variation of the nonlinear potential term \( F(\mathbf{u}) \). 

Gradient flow methods have been quite popular to tackle this type of problems \cite{DuLiuWan04,DuLiuWan06,IyeXuZar15,CaiSheXu17,LiuQiaZha21,LiuQiaZha23}, owing to their obvious advantages: (i) only the gradient information (or first-order variational derivative) of the objective functional $E(\mathbf{u})$ is required, which enables easy implementation through various discretization strategies;  (ii) the energy stability is often preserved due to the dissipative mechanism. Notably, employing the $L^2$-gradient flow of the energy \eqref{eq:energy} leads to the derivation of a parabolic equation, which can be expressed as:
\begin{align}
\label{eq:parabolic_equation}
    \partial_t \mathbf{u} = \Delta \mathbf{u} - f(\mathbf{u}), \quad \text{for } \mathbf{x} \in \Omega, \, t > 0,
\end{align}
subject to suitable boundary conditions on \(\partial\Omega\times\{t\,|\,t\geq0\}\). This equation is reminiscent of the well-known Allen-Cahn equation (system), a representative equation of phase-field type models.

The idea of second-order flows is rooted in a recent topic in convex optimization. Considering some cost function $J:H\to \mathbb{R}$ for minimization, second-order dynamics of the following type have been of high interest in the literature \cite{EdvSveGulPer12,SuBoyCan16,AttChbPey18,AttBotCse22}
\begin{align}
\label{eq:second-order flow}
\ddot{\mathbf{u}} + \eta(t) \dot{\mathbf{u}} &= -\nabla J(\mathbf{u}),
\end{align}
where \(\eta(t) > 0\) is a damping coefficient.
When $J(\mathbf{u})$ is a convex function over finite dimensional spaces, i.e., \eqref{eq:second-order flow} is a system of ordinary differential equations (ODEs), such a formulation is known to offer distinct advantages referring to the recent progress of second-order inertial dynamics in optimization. This was mainly sparked by the work of Su et al. \cite{SuBoyCan16}, which established a connection between the second-order ODEs and Nesterov’s accelerated gradient method \cite{Nes83}. Subsequent research, including studies \cite{AttChbPey18,ZhaBer20,OG2020EJP,BotDonElb22,AttBotCse22}, has continued to explore the theoretical and practical aspects of these second-order ODEs. In this work we focus on \emph{second-order flows particularly corresponding to PDEs}. The study of such second-order flows faces additional complexities, particularly in their theoretical understanding and numerical analysis, due to the fact that $\nabla J(\mathbf{u})$ involves spatial differential operators, see, e.g., \cite{DonHinZha21}. In a recent work \cite{CheDonLiuXie23}, the authors have introduced two distinct types of second-order flows as strategies for minimizing the Gross-Pitaevskii functional under the non-convex constraints, which is a fundamental model for simulating the ground states of rotating Bose-Einstein condensates (BECs). The newly introduced second-order flow methods, incorporating both explicit and semi-implicit temporal discretizations, have demonstrated notable improvements over gradient flow type approaches in terms of computational efficiency.

Investigating the analytical aspects, numerical methods and application of second-order flows for (non-convex) variational problems in Sobolev spaces offers an intriguing and largely unexploited area of research. Despite some of the initial treatment as explored in \cite{DonHinZha21}, and progress on numerical efficiency made in the previous work \cite{CheDonLiuXie23}, a comprehensive theoretical and numerical understanding of second-order flows, especially in the context of non-convex variational problems, remains a challenging open topic. That motivates the current work, where we try to go one step further towards non-convex variational problems with second-order flows. In particular, we wish to establish some foundations, e.g. their well-posedness and convergence to stationary points. A full convergence analysis is particularly challenging with non-convex energies, where even in the finite-dimensional setting the use of some form of \L{}ojasiewicz-type inequalities is required \cite{BegBolJen15, JiaSiCheBao20}. In the PDE setting, the adequate notion is that of \L{}ojasiewicz-Simon inequalities, which were first introduced in \cite{Sim83} and have been applied in various contexts \cite{BaoCheJiaQiu24}. However, for their application to semilinear second-order dissipative equations as in \cite{HarJen99} and related works, the specific energies \eqref{eq:energy} that we consider here, the vectorial setting and strength of the nonlinearities prevent such methods from being directly applicable.

When taking $H=H_0^1(\Omega,\mathbb{R}^N)$ and formulating a second-order flow to approach a minimizer or stationary point of \( E(\mathbf{u}) \) in \eqref{eq:energy}, we are led to the following dissipative hyperbolic PDE:
\begin{align}
\label{eq:damped_hyperbolic_equation}
    \partial_{tt} \mathbf{u} + \eta(t) \partial_t \mathbf{u} &= \Delta \mathbf{u} - f(\mathbf{u}), \quad \mathbf{x}\in \Omega, \, t > 0,
\end{align}
with initial data \( \mathbf{u}(\cdot, 0) = \mathbf{u}^0 \) and \( \partial_t \mathbf{u}(\cdot, 0) = \mathbf{v}^0 \) in \( \Omega \). For simplicity, we consider  homogeneous Dirichlet boundary conditions on $\partial\Omega\times\{t\,|\,t\geq0\}$. Denoting  \(\mu\) to be the first-order variational derivative of $E$ (or chemical potential), 
$$
\mu:=-\Delta \mathbf{u} + f(\mathbf{u}) \in H^{-1}(\Omega, \mathbb{R}^N),
$$
the damped hyperbolic PDE \eqref{eq:damped_hyperbolic_equation} can be reformulated as the system
\begin{align}
\partial_t\mathbf{v}& =-\mu-\eta(t)\mathbf{v}, \label{eq:rewritten_damped_hyperbolic1}\\ 
\partial_t \mathbf{u} & = \mathbf{v}. \label{eq:rewritten_damped_hyperbolic2}
\end{align}
Building on insights from \cite{CheDonLiuXie23}, we note a fundamental distinction between the gradient flow \eqref{eq:parabolic_equation}, which exhibits energy-decaying of \( E \), and the second-order flow \eqref{eq:damped_hyperbolic_equation} that dissipates the following pseudo-energy via Lyapunov analysis \cite{AttChbPey18}:
\begin{align}
\label{pseudo-energy-ac}
\mathcal{E}(\mathbf{u}, \mathbf{v}) = E(\mathbf{u}) + \frac{1}{2}\|\mathbf{v}\|_{L^2}^2.
\end{align}

In \cite{CheDonLiuXie23}, a semi-implicit scheme with a stabilization factor was applied to discretizing second-order flows, achieving remarkable computational efficiency. However, this approach lacks rigorous theoretical support, such as unconditional energy stability and convergence analysis. The goal of this paper is to develop robust numerical schemes with theoretical guarantees for second-order flows. The main challenges in the theoretical analysis of these nonlinear hyperbolic PDEs are due to the non-convexity of the energy functional $E:H\to \mathbb{R}$. The convex-splitting method is a proper tool in this regard, renowned for its ability to ensure energy stability and unique solvability, independent of the temporal and spatial step sizes. This method was popularized by Eyre \cite{Eyr98} and widely employed in various contexts, such as phase-field models \cite{FenWis12,DieFenWis15}, thin film epitaxy models \cite{SheWanWanWis12}, phase-field crystal models \cite{HuWisWan09,WisWanLow09}, and modified phase-field crystal models \cite{WanWis11,BasHuLow13,BasLowWan13}. 
Inspired by these successful examples in the literature, we adopt the convex-splitting approach in our study to the hyperbolic PDEs described in \eqref{eq:damped_hyperbolic_equation}.

During the preparation of the paper, we are aware of the works on viscous Cahn-Hilliard equation \cite{Gal01,GalJou05,YanZhaHe18} and the MPFC model \cite{SteHaaPro06,ProDanAth07,WanWis11,BasHuLow13,BasLowWan13}. We would distinguish our work with the existing ones in the literature, as the latter are tailored for simulating specific physical processes. 
Our research employs second-order flows as artificial dynamics, targeting the minimization of the energy functionals in \eqref{eq:energy}, especially their novel numerical algorithms and analysis. Moreover, while the existing models typically incorporate higher-order spatial derivatives, exhibiting higher spatial regularity in the solutions, our investigation into hyperbolic Allen-Cahn type equations, as represented by \eqref{eq:damped_hyperbolic_equation}, confronts the absence of such regularity, adding certain challenges to the analysis. Thus, the distinction in both the mathematical focus and motivation underscores the contribution and novel perspective of our work.

The rest of the paper is structured as follows. Section \ref{section:convergence} begins with a first-order time-discrete, space-continuous convex-splitting scheme, proving its strict pseudo-energy decay and convergence (up to subsequences) to a stationary point of \(E\). By establishing timestep-independent estimates, Section \ref{sec:well-posedness} demonstrates that sequences constructed from numerical solutions of this scheme converge to solutions of the original PDE in \eqref{eq:damped_hyperbolic_equation}. Leveraging this convergence, we subsequently establish the unique existence of a global smooth solution for \eqref{eq:damped_hyperbolic_equation}. We then provide corresponding results for a second-order time-discrete, space-continuous convex-splitting scheme in Section \ref{sec:second-order}. Section \ref{section:numerics} introduces two fully discrete schemes of the second-order flow to provide novel algorithms for the energy minimization problem and presents numerical results to validate the efficiency of our proposed algorithms. To make comparisons, we list other involved relevant algorithms in the Appendix.

We use \((\cdot, \cdot)\) to denote the standard \(L^2\)-inner product for all \(\mathbf{u}, \mathbf{v} \in L^2(\Omega,\mathbb{R}^N)\) as
$$(\mathbf{u}, \mathbf{v} ):=\int_{\Omega} \mathbf{u}(\mathbf{x}) \cdot \mathbf{v}(\mathbf{x})  d \mathbf{x}.$$
Furthermore, \(a(\mathbf{u},\mathbf{v})\) :=\((\nabla\mathbf{u},\nabla\mathbf{v})=\int_{\Omega} \nabla\mathbf{u}(\mathbf{x}):\nabla\mathbf{v}(\mathbf{x}) d \mathbf{x}\). For brevity, we will often omit the domain \(\Omega\) and vector space \(\mathbb{R}^N\) when referring to Bochner spaces \(L^p(0, T; X(\Omega, \mathbb{R}^N))\) throughout this paper. For instance, \(L^p(0, T; X(\Omega, \mathbb{R}^N))\) will be denoted as \(L^p(0, T; X)\).  Throughout the paper the letter $C$ appeared in inequalities will represent some generic positive constants, which are different from time to time.

\section{Convergence to stationary points via  a convex-splitting scheme}\label{section:convergence}
In this section, we introduce the first-order semi-discrete convex-splitting scheme for the second-order flow described by \eqref{eq:damped_hyperbolic_equation}, which is equivalently represented by \eqref{eq:rewritten_damped_hyperbolic1}-\eqref{eq:rewritten_damped_hyperbolic2}. This convex-splitting method is based on the observation that the energy functional \(E\) can be effectively decomposed into the subtraction of two convex functionals, namely $E=E_c-E_e$. A typical decomposition for energy functional \eqref{eq:energy} is given by
\begin{align}\label{eq:decomposition}
E_c = \int_{\Omega} \left( \frac{1}{2} |\nabla \mathbf{u}|^2 + \frac{\alpha}{4}|\mathbf{u}|^4 + \gamma \right) \mathrm{d} \mathbf{x}, \quad E_e = \frac{\beta}{2}\int_{\Omega} |\mathbf{u}|^2 \mathrm{d} \mathbf{x}. 
\end{align}

Building on the decomposition \eqref{eq:decomposition} and fixing a timestep $\tau >0$ and a sequence of damping coefficients $\eta^ k:=\eta(t_k) > 0$ with $t_k=k\tau$, $k=0,1,\ldots$, we introduce the following first-order convex-splitting scheme for \eqref{eq:rewritten_damped_hyperbolic1}-\eqref{eq:rewritten_damped_hyperbolic2}:
\begin{align}
\mathbf{v}^{k+1} - \mathbf{v}^k & = -\tau {\mu}^{k+1} - \tau \eta^{k+1} \mathbf{v}^{k+1}, \label{eq:cs-1} \\
{\mu}^{k+1} & = \delta_{\mathbf{u}} E_c(\mathbf{u}^{k+1}) - \delta_{\mathbf{u}} E_e(\mathbf{u}^k), \label{eq:cs-2} \\
\mathbf{u}^{k+1} - \mathbf{u}^k & = \tau \mathbf{v}^{k+1}. \label{eq:cs-3}
\end{align}
By taking the \(L^2\) inner product of \eqref{eq:cs-1} with \(\mathbf{v}^{k+1}\) and of \eqref{eq:cs-3} with \(\mu^{k+1}\), and then combining these results, the following pseudo-energy stability property arises:
\begin{align}
\mathcal{E}(\mathbf{u}^{k+1}, \mathbf{v}^{k+1}) {\!}-{\!} \mathcal{E}(\mathbf{u}^k, \mathbf{v}^k) 
&\leq \left(\delta_{\mathbf{u}} E_c(\mathbf{u}^{k+1}) - \delta_{\mathbf{u}} E_e(\mathbf{u}^k), \mathbf{u}^{k+1} {\!}-{\!} \mathbf{u}^k\right) {\!}+{\!} \left(\mathbf{v}^{k+1}, \mathbf{v}^{k+1} {\!}-{\!} \mathbf{v}^k\right) {\!\!}\nonumber \\
&= \left({\mu}^{k+1}, \tau \mathbf{v}^{k+1}\right) + \left(\mathbf{v}^{k+1}, \mathbf{v}^{k+1} - \mathbf{v}^k\right) \nonumber \\
&= -\tau \eta^{k+1} \| \mathbf{v}^{k+1} \|_{L^2}^2 \leq 0, \label{eq:pseudo_energy-calculate}
\end{align}
where we use the convexity of $E_c$ and $E_e$ with respect to $\mathbf{u}$, and $\frac{1}{2}\|\mathbf{v}\|_{L^2}^2$ with respect to $\mathbf{v}$. Similarly, we decompose $F$ into $F=F_c-F_e$ with $F_c(\mathbf{u})=\frac{\alpha}{4}|\mathbf{u}|^4 + \gamma$ and $F_e(\mathbf{u})=\frac{\beta}{2}|\mathbf{u}|^2$, and denote \( f_c =  F_c^{\prime} \) and \( f_e = F_e^{\prime} \). To facilitate subsequent analysis, we recast the spatially continuous, temporally discrete scheme \eqref{eq:cs-1}-\eqref{eq:cs-3} as
\begin{align}\label{eq:combined-cs}
    \frac{\mathbf{u}^{k+1} - 2\mathbf{u}^k + \mathbf{u}^{k-1}}{\tau^2} + \eta^{k+1} \frac{\mathbf{u}^{k+1} - \mathbf{u}^k}{\tau} &= \Delta \mathbf{u}^{k+1} -\left(f_c(\mathbf{u}^{k+1}) - f_e(\mathbf{u}^k)\right).
\end{align}
Here we set $\mathbf{v}^0 \equiv \mathbf{0}$, or equivalently, $\mathbf{u}^{-1} \equiv \mathbf{u}^0$.

For the sake of simplicity, we assume throughout this paper that the initial value of $\partial_t \mathbf{u}$ in the PDE \eqref{eq:damped_hyperbolic_equation} is $\partial_t \mathbf{u}(\cdot,0)=\mathbf{v}^0 \equiv \mathbf{0}$. This condition also ensures that the flow trajectory is contained within the energy sub-level set $\{\mathbf{u} \in H: E(\mathbf{u}) \leq E(\mathbf{u}^0)\}$, providing some local stability for energy minimization. This follows directly from the dissipation of the pseudo-energy functional $\mathcal{E}(\mathbf{u},\mathbf{v})$ defined in \eqref{pseudo-energy-ac}. It is worth noting that the findings and conclusions drawn in the following would remain valid if $\mathbf{v}^0\neq \mathbf{0}$ were chosen in appropriate spaces. We further assume the domain $\Omega$ to be of $C^2$ boundary so that $H^2$ regularity of the solutions can be guaranteed. This assumption could be relaxed to convex domains with Lipschitz boundaries by using more refined versions of such estimates (see, e.g., \cite{Gri85}).

We show two useful lemmas first.
\begin{lemma}\label{lemma3:energy-bound}
Let \(\mathbf{u} \in H_0^1(\Omega, \mathbb{R}^N)\). Then
\begin{align*}
E(\mathbf{u})\geq C\left\|\mathbf{u}\right\|_{H^1}^2 +C^{\star},
\end{align*}
where \( C>0 \) and \( C^{\star}\) are constants depending only on $\Omega$, $\alpha$, $\beta$ and $\gamma$.
\end{lemma}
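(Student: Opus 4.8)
The plan is to prove the inequality pointwise in the integrand and then integrate, cleanly separating the Dirichlet and potential parts of $E$. First I would write
\[
E(\mathbf{u}) = \frac{1}{2}\|\nabla\mathbf{u}\|_{L^2}^2 + \int_{\Omega} F(\mathbf{u})\, \mathrm{d}\mathbf{x},
\]
so that the gradient term already supplies $\frac{1}{2}\|\nabla\mathbf{u}\|_{L^2}^2$, which is exactly the seminorm part of the desired $H^1$ bound. It then remains to bound $\int_\Omega F(\mathbf{u})$ from below by a positive multiple of $\|\mathbf{u}\|_{L^2}^2$ plus a constant.

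The key step is a pointwise lower bound on $F$. Setting $t = |\mathbf{u}|^2 \geq 0$, the potential reads $F = \frac{\alpha}{4}t^2 - \frac{\beta}{2}t + \gamma$, a quadratic in $t$ with positive leading coefficient, so its quartic growth dominates the sign-indefinite quadratic term. Concretely, for any fixed $c>0$, completing the square (equivalently, applying Young's inequality to the term $\frac{\beta}{2}t$) gives
\[
\frac{\alpha}{4}t^2 - \frac{\beta}{2}t + \gamma \geq c\,t + M, \qquad M := \gamma - \frac{(\beta/2 + c)^2}{\alpha},
\]
valid for all $t\geq 0$, since the difference of the two sides is a quadratic with positive leading coefficient whose minimum equals $M$. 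Substituting back $t=|\mathbf{u}|^2$ yields $F(\mathbf{u}) \geq c|\mathbf{u}|^2 + M$ pointwise.

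Integrating this over $\Omega$ gives $\int_\Omega F(\mathbf{u})\,\mathrm{d}\mathbf{x} \geq c\|\mathbf{u}\|_{L^2}^2 + M|\Omega|$, and combining with the gradient contribution produces
\[
E(\mathbf{u}) \geq \frac{1}{2}\|\nabla\mathbf{u}\|_{L^2}^2 + c\|\mathbf{u}\|_{L^2}^2 + M|\Omega| \geq \min\!\left(\frac{1}{2},\, c\right)\|\mathbf{u}\|_{H^1}^2 + M|\Omega|,
\]
so the claim holds with $C = \min\!\left(\frac{1}{2}, c\right)$ and $C^{\star} = M|\Omega|$, both depending only on $\Omega$, $\alpha$, $\beta$, $\gamma$ (one may simply fix $c=\frac{1}{2}$ for concreteness). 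I do not expect any genuine obstacle here: the estimate is essentially coercivity of the energy. The only subtlety is the absorption of the negative term $-\frac{\beta}{2}|\mathbf{u}|^2$ into the quartic term plus a constant, which is handled by the completing-the-square step above. Notably, neither the homogeneous boundary condition nor a Poincar\'e inequality is needed, since the pointwise bound already yields coercivity in the full $H^1$-norm.
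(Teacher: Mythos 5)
Your proof is correct, but it takes a genuinely different route from the paper. The paper's proof splits the two terms the other way: it invokes Poincar\'e's inequality on the Dirichlet term to convert $\tfrac12\|\nabla\mathbf{u}\|_{L^2}^2$ into a full $H^1$-norm bound (this is where the hypothesis $\mathbf{u}\in H_0^1$ and the boundedness of $\Omega$ enter), and then bounds the potential term below by a pure constant via the standard completion of the square, $\int_\Omega F(\mathbf{u})\,\mathrm{d}\mathbf{x} = \int_\Omega \tfrac{\alpha}{4}\bigl(|\mathbf{u}|^2-\tfrac{\beta}{\alpha}\bigr)^2\,\mathrm{d}\mathbf{x} + \bigl(\gamma-\tfrac{\beta^2}{4\alpha}\bigr)|\Omega|$, discarding the nonnegative quartic part. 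You instead keep the gradient term as a seminorm and extract the missing $L^2$-coercivity from the quartic term itself, via the strengthened pointwise bound $F(\mathbf{u})\geq c|\mathbf{u}|^2 + M$ with $M=\gamma-(\beta/2+c)^2/\alpha$. What your argument buys is independence from Poincar\'e and hence from the boundary condition: your estimate holds for all of $H^1(\Omega,\mathbb{R}^N)$, not just $H_0^1$, which is relevant e.g.\ for the paper's remark on inhomogeneous Dirichlet data, where the solution space is an affine subset of $H^1$. What the paper's version buys is brevity and a slightly sharper additive constant ($\gamma-\tfrac{\beta^2}{4\alpha}$ per unit volume versus your $M$, which is strictly smaller for any fixed $c>0$); both versions require $|\Omega|<\infty$ for the constant term to be finite, so neither extends to unbounded domains without further work.
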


\begin{proof}
     Using Poincar\'{e}'s inequality, there exists a constant $C>0$ such that \( \frac12\|\nabla \mathbf{u}\|_{L^2} \geq C \left\|\mathbf{u}\right\|_{H^1} \). The integral of \(F(\mathbf{u})\) over \(\Omega\) can be rewritten as follows:
\begin{align*}\int_{\Omega}F(\mathbf{u})d\mathbf{x}
    &=\int_{\Omega}\left(\frac{\alpha}{4}|\mathbf{u}|^4-\frac{\beta}{2}|\mathbf{u}|^2+\gamma\right) \mathrm{d} \mathbf{x}
    =\int_{\Omega}\frac{\alpha}{4}\left(|\mathbf{u}|^2-\frac{\beta}{\alpha}\right)^2 + \left(\gamma-\frac{\beta^2}{4\alpha}\right) \mathrm{d} \mathbf{x}.
    \end{align*}
    By taking \(C^{\star}=\int_{\Omega}\left(\gamma-\frac{\beta^2}{4\alpha}\right) \mathrm{d} \mathbf{x}\), we obtain the estimate.
\end{proof}

\begin{lemma}\label{lemma:priori-estimates_h1}
   Assume \( \mathbf{u}^0 \in H_0^1(\Omega, \mathbb{R}^N) \) and that $\eta^k >0$ is bounded above. Let \( \mathbf{u}^k \in H_0^1(\Omega, \mathbb{R}^N) \) and \( \mathbf{v}^k \in L^2(\Omega, \mathbb{R}^N) \) be the solutions of \eqref{eq:cs-1}-\eqref{eq:cs-3} for all \( k \geq 1 \). Then it holds that
    \begin{align}\label{eq:semi-discrete-estimate1}
        \max_{ k\geq 1}\left(\| \mathbf{u}^k \|_{H^1} + \| \mathbf{v}^k \|_{L^2} + \left\| \frac{\mathbf{v}^k - \mathbf{v}^{k-1}}{\tau} \right\|_{H^{-1}}\right) \leq C,
    \end{align}
    where \( C > 0 \) is a constant independent of \( \tau \).
\end{lemma}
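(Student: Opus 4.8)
The plan is to obtain the first two bounds directly from the dissipation of the pseudo-energy, and to read off the third from the scheme equation by controlling its right-hand side in $H^{-1}$. First I would note that the stability computation \eqref{eq:pseudo_energy-calculate} gives $\mathcal{E}(\mathbf{u}^{k+1},\mathbf{v}^{k+1}) \leq \mathcal{E}(\mathbf{u}^k,\mathbf{v}^k)$ for every $k$, so telescoping back to the initial data (where $\mathbf{v}^0 \equiv \mathbf{0}$) yields
$$ E(\mathbf{u}^k) + \tfrac12\|\mathbf{v}^k\|_{L^2}^2 = \mathcal{E}(\mathbf{u}^k,\mathbf{v}^k) \leq \mathcal{E}(\mathbf{u}^0,\mathbf{v}^0) = E(\mathbf{u}^0). $$
Substituting the coercivity estimate $E(\mathbf{u}^k) \geq C\|\mathbf{u}^k\|_{H^1}^2 + C^{\star}$ of Lemma \ref{lemma3:energy-bound} into this inequality isolates $C\|\mathbf{u}^k\|_{H^1}^2 + \tfrac12\|\mathbf{v}^k\|_{L^2}^2 \leq E(\mathbf{u}^0) - C^{\star}$, which bounds $\|\mathbf{u}^k\|_{H^1}$ and $\|\mathbf{v}^k\|_{L^2}$ uniformly in both $k$ and $\tau$.

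For the remaining term I would rewrite \eqref{eq:cs-1}, after shifting the index by one, as
$$ \frac{\mathbf{v}^k - \mathbf{v}^{k-1}}{\tau} = -\mu^k - \eta^k \mathbf{v}^k = \Delta \mathbf{u}^k - \alpha|\mathbf{u}^k|^2\mathbf{u}^k + \beta \mathbf{u}^{k-1} - \eta^k \mathbf{v}^k, $$
and estimate the four terms on the right in the $H^{-1}$-norm. The linear contributions are immediate: $\|\Delta\mathbf{u}^k\|_{H^{-1}} \leq \|\nabla\mathbf{u}^k\|_{L^2} \leq \|\mathbf{u}^k\|_{H^1}$ by duality against $H_0^1$ test functions, while $\|\beta\mathbf{u}^{k-1}\|_{H^{-1}}$ and $\|\eta^k\mathbf{v}^k\|_{H^{-1}}$ follow from the continuous embedding $L^2 \hookrightarrow H^{-1}$, the assumed uniform upper bound on $\eta^k$, and the $H^1$, $L^2$ bounds already established.

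The genuinely nonlinear step, and the main obstacle, is the cubic term $\alpha|\mathbf{u}^k|^2\mathbf{u}^k$. Here I would invoke the Sobolev embedding $H^1(\Omega) \hookrightarrow L^6(\Omega)$, which holds precisely because $d \leq 3$, to write $\||\mathbf{u}^k|^2\mathbf{u}^k\|_{L^2}^2 = \|\mathbf{u}^k\|_{L^6}^6 \leq C\|\mathbf{u}^k\|_{H^1}^6$; thus the cubic term lies in $L^2 \hookrightarrow H^{-1}$ with a norm controlled by the $H^1$ bound from the first step. Collecting the four contributions produces the uniform $H^{-1}$ bound on the discrete acceleration $(\mathbf{v}^k - \mathbf{v}^{k-1})/\tau$, and hence the full estimate \eqref{eq:semi-discrete-estimate1}. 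The decisive point throughout is that every constant depends only on $E(\mathbf{u}^0)$, the embedding constants, and $\sup_k \eta^k$, never on $\tau$, so that the bound is timestep-independent as claimed.
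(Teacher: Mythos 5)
Your proposal is correct and follows essentially the same route as the paper's proof: both obtain the $H^1$ and $L^2$ bounds by telescoping the pseudo-energy decay \eqref{eq:pseudo_energy-calculate} and inserting the coercivity estimate of Lemma~\ref{lemma3:energy-bound}, and both bound the discrete acceleration in $H^{-1}$ by estimating the scheme's right-hand side term by term, using $L^2 \hookrightarrow H^{-1}$ together with the Sobolev embedding $H^1 \hookrightarrow L^6$ (valid since $d\leq 3$) to handle the cubic term. The only deviation—writing the estimate at index $k$ instead of $k+1$—is purely cosmetic.
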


\begin{proof}
Applying Lemma~\ref{lemma3:energy-bound} and using the non-increasing property of the pseudo-energy \( \mathcal{E} \) and the choice $\mathbf{v}^0 \equiv 0$, we establish the following inequalities:
    $$
    C\|\mathbf{u}^k\|_{H^1}^2 +C^{\star}+\frac12\|\mathbf{v}^k\|_{L^2}^2\leq E(\mathbf{u}^k)+\frac12\|\mathbf{v}^k\|_{L^2}^2 = \mathcal{E}(\mathbf{u}^k, \mathbf{v}^k) \leq \mathcal{E}(\mathbf{u}^0, \mathbf{v}^0), 
    $$
from which the estimates on the first two terms are obtained. For the third term, \eqref{eq:combined-cs} and $\eta_k$ being bounded above allows us to estimate
    \begin{align}
    \left\|\frac{\mathbf{v}^{k+1} - \mathbf{v}^k}{\tau}\right\|_{H^{-1}}\! 
    & \leq \|\Delta \mathbf{u}^{k+1}\|_{H^{-1}} \!+ \alpha\big\||\mathbf{u}^{k+1}|^2\mathbf{u}^{k+1}\big\|_{H^{-1}} \!+ \beta\|\mathbf{u}^k\|_{H^{-1}} \!+ \eta^{k+1}\|\mathbf{v}^{k+1}\|_{H^{-1}}\!\!\! \nonumber\\
    & \leq \|\nabla \mathbf{u}^{k+1}\|_{L^2} + \alpha\big\||\mathbf{u}^{k+1}|^2\mathbf{u}^{k+1}\big\|_{L^2} + \beta\|\mathbf{u}^k\|_{L^2} + \eta^{k+1}\|\mathbf{v}^{k+1}\|_{L^2} \nonumber\\
    & \leq C\left(\|\mathbf{u}^{k+1}\|_{H^1} + \|\mathbf{u}^{k+1}\|_{H^1}^3+\|\mathbf{u}^k\|_{L^2} + \|\mathbf{v}^{k+1}\|_{L^2}\right),\label{eq:third-estimate-calculation}
    \end{align}
    which concludes the proof.
\end{proof}

The forthcoming analysis, which ensures unique energy-stable solutions for the proposed scheme, is summarized by the following theorem:
\begin{theorem}[Unconditional unique solvability]\label{theorem3:unique-solvability}
    Given any \( \tau > 0 \) and initial conditions \( \mathbf{u}^k, \mathbf{u}^{k-1} \in H_0^1(\Omega, \mathbb{R}^N)\), there exists a unique solution \( \mathbf{u}^{k+1} \in H^2(\Omega, \mathbb{R}^N)\cap H_0^1(\Omega, \mathbb{R}^N)\) for the scheme \eqref{eq:cs-1}-\eqref{eq:cs-3}, or equivalently, the scheme \eqref{eq:combined-cs}. This unique solution ensures the following inequality related to pseudo-energy decay:
    \begin{equation}\label{eq:pseudo_energy-decay}
        \mathcal{E}(\mathbf{u}^{k+1}, \mathbf{v}^{k+1}) + \tau \eta^{k+1} \|\mathbf{v}^{k+1}\|_{L^2}^2 \leq \mathcal{E}(\mathbf{u}^k, \mathbf{v}^k),
    \end{equation}
    where \( \mathcal{E} \) is the pseudo-energy defined in equation \eqref{pseudo-energy-ac}. Moreover, assuming that $\eta^k >0$ is bounded above, these solutions satisfy
    \begin{equation}\label{eq:tau-exploding-h2-estimate}\|\mathbf{u}^{k+1}\|_{H^2} \leq C(\tau) \quad\text{ for all }k\geq 0,\end{equation}
    where $C(\tau)$ depends on $\tau$ but is independent of $k$.
\end{theorem}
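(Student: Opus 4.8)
\emph{The plan.} I would view the one-step update \eqref{eq:combined-cs} as a nonlinear elliptic problem for $\mathbf{u}^{k+1}$ and solve it by convex minimization. Writing $a := \tau^{-2} + \tau^{-1}\eta^{k+1} > 0$ and collecting all terms depending only on the known data $\mathbf{u}^k,\mathbf{u}^{k-1}$ into $\mathbf{g} := \tau^{-2}(2\mathbf{u}^k - \mathbf{u}^{k-1}) + \tau^{-1}\eta^{k+1}\mathbf{u}^k + f_e(\mathbf{u}^k) \in L^2$, finding $\mathbf{u}^{k+1}$ amounts to solving
\[ a\,\mathbf{u}^{k+1} - \Delta \mathbf{u}^{k+1} + f_c(\mathbf{u}^{k+1}) = \mathbf{g}, \qquad \mathbf{u}^{k+1} \in H_0^1(\Omega,\mathbb{R}^N). \]
This is precisely the Euler--Lagrange equation of
\[ \Phi(\mathbf{w}) := \frac{a}{2}\|\mathbf{w}\|_{L^2}^2 + E_c(\mathbf{w}) - (\mathbf{g}, \mathbf{w}), \]
which is \emph{strictly} convex on $H_0^1$ because $E_c$ is convex and $a>0$.

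For unique solvability I would run the direct method on $\Phi$. Coercivity on $H_0^1$ follows from Poincar\'e's inequality applied to the Dirichlet energy inside $E_c$, the nonnegativity of the quartic and $\frac{a}{2}\|\cdot\|_{L^2}^2$ terms, and absorbing the linear term via Young's inequality; the embedding $H^1 \hookrightarrow L^4$ (valid for $d\leq 3$ on bounded $\Omega$) ensures $\Phi$ is finite-valued. Weak lower semicontinuity holds since the quadratic and quartic parts are convex and continuous and the linear part is weakly continuous, so a minimizer exists, and strict convexity renders it unique. The minimizer solves the displayed equation weakly, which is exactly the scheme \eqref{eq:cs-1}--\eqref{eq:cs-3}. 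The pseudo-energy inequality \eqref{eq:pseudo_energy-decay} is then just a rearrangement of the computation \eqref{eq:pseudo_energy-calculate} already carried out.

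To obtain $H^2$-regularity and the bound \eqref{eq:tau-exploding-h2-estimate}, I would rewrite the equation as $-\Delta\mathbf{u}^{k+1} = \mathbf{g} - a\mathbf{u}^{k+1} - f_c(\mathbf{u}^{k+1})$ and verify the right-hand side lies in $L^2$. The only nontrivial term is the cubic nonlinearity $f_c(\mathbf{u}^{k+1}) = \alpha|\mathbf{u}^{k+1}|^2\mathbf{u}^{k+1}$, which is in $L^2$ with $\|f_c(\mathbf{u}^{k+1})\|_{L^2} \leq C\|\mathbf{u}^{k+1}\|_{H^1}^3$ thanks to $H^1 \hookrightarrow L^6$ for $d\leq 3$; elliptic regularity under the $C^2$ boundary assumption then gives $\mathbf{u}^{k+1}\in H^2\cap H_0^1$ and $\|\mathbf{u}^{k+1}\|_{H^2}\leq C\|\Delta\mathbf{u}^{k+1}\|_{L^2}$. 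For the explicit $\tau$-dependence I would estimate each term of \eqref{eq:combined-cs} in $L^2$: the second difference equals $\tau^{-1}(\mathbf{v}^{k+1}-\mathbf{v}^k)$ by \eqref{eq:cs-3}, the damping term equals $\eta^{k+1}\mathbf{v}^{k+1}$, and both are controlled using the \emph{uniform} $L^2$ bounds on $\mathbf{v}^k$ from Lemma~\ref{lemma:priori-estimates_h1}, while the nonlinear terms use the uniform $H^1$ bounds. This yields $\|\Delta\mathbf{u}^{k+1}\|_{L^2} \leq C(\tau)$, of order $\tau^{-1}$ but independent of $k$.

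The step I expect to be most delicate is the regularity bootstrap: placing the cubic nonlinearity in $L^2$ relies on the \emph{critical} embedding $H^1\hookrightarrow L^6$ in three dimensions, so the argument is tight and exploits the precise cubic growth of $f_c$. By contrast, the variational existence and uniqueness are routine once $\Phi$ is identified, and the $\tau$-dependent bound follows from a direct computation---the essential subtlety there being that one must use the full $L^2$ bounds on $\mathbf{v}^k$ rather than the weaker $H^{-1}$ bound on the time differences furnished by Lemma~\ref{lemma:priori-estimates_h1}.
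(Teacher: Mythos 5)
Your proposal is correct and follows essentially the same route as the paper: your functional $\Phi$ coincides (after expanding the square) with the paper's $G^k$ up to an additive constant independent of the unknown, so the direct-method existence/uniqueness argument, the identification of the scheme with the Euler--Lagrange equation, and the derivation of \eqref{eq:pseudo_energy-decay} from \eqref{eq:pseudo_energy-calculate} are all the same. Your $H^2$ bootstrap also matches the paper's: both treat \eqref{eq:combined-cs} as a linear elliptic equation with an $L^2$ right-hand side controlled via $H^1\hookrightarrow L^6$ and the uniform bounds of Lemma~\ref{lemma:priori-estimates_h1}, then invoke standard elliptic regularity on the $C^2$ domain.
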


\begin{proof}
Starting with $\mathbf{u}^k, \mathbf{u}^{k-1} \in H_0^1(\Omega, \mathbb{R}^N)$, we define
\begin{align*}
 G^k(\mathbf{u}) = \frac{\eta^{k+1}+\frac{1}{\tau}}{2 \tau}\|\mathbf{u}-\mathbf{u}^k\|_{L^{2}}^2 - \frac{1}{\tau}\left(\mathbf{u}, \mathbf{v}^k\right) + \frac{1}{2}\|\nabla \mathbf{u}\|_{L^2}^2 + \int_{\Omega}\left(F_c(\mathbf{u})-\mathbf{u}\cdot f_e(\mathbf{u}^k)\right)\mathrm{d} \mathbf{x}.
\end{align*}
This functional $G^k$ is shown to be strictly convex and coercive on $H_0^1(\Omega, \mathbb{R}^N)$, implying the existence of a unique minimizer, denoted as $\mathbf{u}^{k+1} \in H_0^1(\Omega, \mathbb{R}^N)$. Furthermore, $\mathbf{u}^{k+1}$ is the unique minimizer of $G^k$ if and only if it is the unique weak solution to \eqref{eq:combined-cs}, i.e., for all $\xi \in H_0^1(\Omega, \mathbb{R}^N)$
\begin{equation*}
  \left(\frac{\mathbf{u}^{k+1} - 2\mathbf{u}^k + \mathbf{u}^{k-1}}{\tau^2} + \eta^{k+1}\frac{\mathbf{u}^{k+1} - \mathbf{u}^k}{\tau}, \xi\right) 
  = -a(\mathbf{u}^{k+1}, \xi) - \left(f_c(\mathbf{u}^{k+1}) - f_e(\mathbf{u}^k), \xi\right). 
\end{equation*}
The energy stability property given in the inequality \eqref{eq:pseudo_energy-decay} follows from the calculations detailed in \eqref{eq:pseudo_energy-calculate}. To show $H^2$ regularity, let us rewrite \eqref{eq:combined-cs} in strong form as
\begin{align}-\Delta \mathbf{u}^{k+1} +\frac{1}{\tau^2}\mathbf{u}^{k+1} 
&= -f_c(\mathbf{u}^{k+1}) + f_e(\mathbf{u}^{k}) + \frac{2\mathbf{u}^k - \mathbf{u}^{k-1}}{\tau^2}+ \eta^{k+1}\frac{\mathbf{u}^{k}-\mathbf{u}^{k+1}}{\tau} \nonumber\\
&=: R_\tau(\eta^{k+1},\mathbf{u}^{k-1},\mathbf{u}^{k},\mathbf{u}^{k+1}),
\label{eq:combined-cs-again}
\end{align}
with $R_\tau: \mathbb{R}^+ \times (\mathbb{R}^d)^3 \to \mathbb{R}^d$ satisfying
\[\big| R_\tau(b,\mathbf{w}_1, \mathbf{w}_2, \mathbf{w}_3) \big| \leq C_b(\tau) \left( |\mathbf{w}_1| + |\mathbf{w}_2| +|\mathbf{w}_3|+|\mathbf{w}_3|^3\right)\]
with $C_b(\tau)$ bounded from above for fixed $\tau$ as $b \to 0$. Moreover, given that \( d \leq 3 \), using the Sobolev embeddings 
\[ \|\mathbf{u}^k\|_{L^2}\leq C\|\mathbf{u}^k \|_{H^1}
\quad \text{ and } \quad \|\mathbf{u}^k\|_{L^6} \leq C\|\mathbf{u}^k \|_{H^1} ,  \]
and the estimates in Lemma \ref{lemma:priori-estimates_h1}, we can infer that there exists a constant \( C(\tau) \) depending on \( \tau \) such that
\[ \big\|R_\tau(\eta^{k+1},\mathbf{u}^{k-1},\mathbf{u}^{k},\mathbf{u}^{k+1})\big\|_{L^2} \leq C(\tau) \quad \text{ for all }k\geq 1. \]
Given this, we consider \eqref{eq:combined-cs-again} as an equation for $\mathbf{u}^{k+1}$ with fixed right-hand side given as $R_\tau(\eta^{k+1},\mathbf{u}^{k-1},\mathbf{u}^{k},\mathbf{u}^{k+1})$, which in fact also decouples it into $N$ scalar equations. In that setting noticing that the coefficient $1/\tau^2 $ in \eqref{eq:combined-cs-again} is positive and bounded above uniformly for fixed $\tau$, we apply a standard $H^2$ regularity theorem such as \cite[Sec.~6.3.2, Thm.~4]{evans2010partial} to arrive at \eqref{eq:tau-exploding-h2-estimate}.
\end{proof}

We now prove the result of subsequential convergence of the solutions of \eqref{eq:combined-cs} to a stationary point of $E=E_c - E_e$, which aligns with the goal for introducing the second-order flows. The strategy hinges on exploiting the pseudo-energy decay estimate \eqref{eq:pseudo_energy-decay}. In our result, we are able to handle the case in which $\eta^k$ converges to zero at a speed of $O(1/k)$ or slower. This case is particularly relevant, because when $\eta(t)=3/t$, some of its temporal discretization corresponds to the Nesterov accelerated gradient descent method; see \cite{SuBoyCan16}. Such acceleration and its time-continuous counterpart, when used for general convex energies, lead to optimal convergence rates which are not guaranteed by either gradient descent or the heavy ball method (i.e. with constant damping). In the time-continuous setting and for non-convex energies, stronger results of convergence of the whole trajectory to a stationary point typically hinge on the use of \L{}ojasiewicz-Simon inequalities. A prominent example is the approach in \cite[Thm.~1.2]{HarJen99} which in fact applies (if extended to the vector-valued setting) to global solutions of \eqref{eq:damped_hyperbolic_equation}, but only with $d=2$ and constant damping coefficient $\eta$.

\begin{theorem}[Subsequential convergence to a stationary point]\label{theorem:subsequential-convergence}
For every fixed $\tau>0$, assume that the damping coefficients $\eta^k$ satisfy
    \begin{equation}\label{eq:Nesterov-or-faster}
\frac{\omega}{k} \leq \eta^k \leq \omega_0  \quad \text{ for some } \omega, \omega_0 >0.  
    \end{equation}
Let $\{\mathbf{u}^k\}_k$ be the sequence generated by the first-order semi-implicit scheme \eqref{eq:cs-1}-\eqref{eq:cs-3}.  Then, there exists a subsequence of $\{\mathbf{u}^k\}_{k}$ converges strongly in $H^1(\Omega, \mathbb{R}^N)$ to a stationary point of $E$.
\end{theorem}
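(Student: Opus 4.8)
The plan is to extract convergence from the summable dissipation built into the pseudo-energy decay \eqref{eq:pseudo_energy-decay}. Since $\mathcal{E}(\mathbf{u}^k,\mathbf{v}^k)\geq E(\mathbf{u}^k)\geq C^{\star}$ is bounded below by Lemma~\ref{lemma3:energy-bound} and is non-increasing, summing \eqref{eq:pseudo_energy-decay} over $k$ telescopes, so that $\sum_{k\geq 1}\tau\eta^k\|\mathbf{v}^k\|_{L^2}^2 \leq \mathcal{E}(\mathbf{u}^0,\mathbf{v}^0)-C^{\star}<\infty$. Invoking the lower bound $\eta^k\geq \omega/k$ from \eqref{eq:Nesterov-or-faster} then yields the crucial series $\sum_{k\geq 1}\frac{1}{k}\|\mathbf{v}^k\|_{L^2}^2<\infty$, which is the only place the slow decay hypothesis on $\eta^k$ enters.

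First I would use this series to locate a good subsequence. Because $\sum_k 1/k$ diverges, summability of $\frac{1}{k}\|\mathbf{v}^k\|_{L^2}^2$ already forces $\liminf_k\|\mathbf{v}^k\|_{L^2}=0$; but to also kill the discrete acceleration I would pass to the paired quantity $c_k:=\|\mathbf{v}^k\|_{L^2}^2+\|\mathbf{v}^{k-1}\|_{L^2}^2$. A simple index shift shows $\sum_k c_k/k<\infty$ as well, whence $\liminf_k c_k=0$. Extracting a subsequence $\{k_j\}$ with $c_{k_j}\to 0$ gives $\|\mathbf{v}^{k_j}\|_{L^2}\to 0$ and $\|\mathbf{v}^{k_j-1}\|_{L^2}\to 0$ simultaneously, and therefore $\bigl\|(\mathbf{v}^{k_j}-\mathbf{v}^{k_j-1})/\tau\bigr\|_{L^2}\to 0$.

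Next I would set up compactness and pass to the limit. The $H^2$ bound \eqref{eq:tau-exploding-h2-estimate} (for the fixed $\tau$) makes $\{\mathbf{u}^{k_j}\}$ bounded in $H^2\cap H_0^1$, so Rellich--Kondrachov gives a further subsequence converging strongly in $H^1$ to some $\mathbf{u}^{\ast}$; since $\mathbf{u}^{k_j}-\mathbf{u}^{k_j-1}=\tau\mathbf{v}^{k_j}\to 0$ in $L^2$, also $\mathbf{u}^{k_j-1}\to\mathbf{u}^{\ast}$ in $L^2$. The scheme at level $k_j$ reads $(\mu^{k_j},\xi)=-\frac{1}{\tau}(\mathbf{v}^{k_j}-\mathbf{v}^{k_j-1},\xi)-\eta^{k_j}(\mathbf{v}^{k_j},\xi)$ for $\xi\in H_0^1$, whose right-hand side tends to $0$ (first term by the paired estimate, second since $\eta^{k_j}\leq\omega_0$ and $\|\mathbf{v}^{k_j}\|_{L^2}\to 0$). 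Writing the same quantity as $(\mu^{k_j},\xi)=a(\mathbf{u}^{k_j},\xi)+(f_c(\mathbf{u}^{k_j})-f_e(\mathbf{u}^{k_j-1}),\xi)$ and letting $j\to\infty$ — using strong $H^1$, hence $L^6$, convergence to handle the cubic $f_c$ and $L^2$ convergence for the linear $f_e$ — I obtain $a(\mathbf{u}^{\ast},\xi)+(f(\mathbf{u}^{\ast}),\xi)=0$ for all $\xi\in H_0^1$, i.e. $\mathbf{u}^{\ast}$ is a stationary point of $E$.

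The main obstacle is exactly the discrete acceleration term $(\mathbf{v}^{k_j}-\mathbf{v}^{k_j-1})/\tau$: Lemma~\ref{lemma:priori-estimates_h1} controls it only in $H^{-1}$ and only uniformly, never asserting that it vanishes, so the naive subsequence with merely $\|\mathbf{v}^{k_j}\|_{L^2}\to 0$ is insufficient. The pairing device is what upgrades this to genuine $L^2$-vanishing along a subsequence; the remaining ingredients are standard weak/strong compactness and continuity of the nonlinearity from $L^6$ into $L^2$ in dimension $d\leq 3$. One should only check that the two successive extractions are compatible, which they are since the $H^1$-convergent subsequence is taken as a refinement of $\{k_j\}$.
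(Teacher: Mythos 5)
Your proposal is correct and follows essentially the same route as the paper's proof: telescoping the pseudo-energy decay \eqref{eq:pseudo_energy-decay}, invoking $\eta^k \geq \omega/k$ to obtain a weighted summable series, pairing consecutive velocity norms so that $\liminf$-extraction gives a subsequence along which both $\mathbf{v}^{k_j}$ and $\mathbf{v}^{k_j-1}$ vanish in $L^2$, and then combining the fixed-$\tau$ bound \eqref{eq:tau-exploding-h2-estimate} with Rellich compactness to pass to the limit in the weak form of the scheme. The only (harmless) deviations are an index shift in the pairing (the paper pairs $k$ with $k+1$) and that you skip extracting $H^1$-limits of the velocities, using their $L^2$-vanishing directly against the test functions, which is slightly leaner but identical in substance.
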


\begin{proof}
Rearranging \eqref{eq:pseudo_energy-calculate} as 
\begin{equation}\label{eq:pseudo_energy-calculate-again}\tau \eta^{k+1} \|\mathbf{v}^{k+1}\|^2_{L^2} \leq \mathcal{E}(\mathbf{u}^{k}, \mathbf{v}^{k}) - \mathcal{E}(\mathbf{u}^{k+1}, \mathbf{v}^{k+1}) \quad \text{ for all }k \geq 0,\end{equation}
and summing over these for $k=0,\ldots,n-1$ for any $n \in \mathbb{N}$, we get
\[\tau \sum_{k = 1}^n \eta^k \|\mathbf{v}^k\|^2_{L^2} \leq \mathcal{E}(\mathbf{u}^{0}, \mathbf{v}^{0}) - \mathcal{E}(\mathbf{u}^{n}, \mathbf{v}^{n})\leq \mathcal{E}(\mathbf{u}^{0}, \mathbf{v}^{0})-C^{\star},\]
implying that
\begin{equation}\label{eq:pseudo_energy_decay_series}\sum_{k = 1}^\infty \eta^k \|\mathbf{v}^k\|^2_{L^2} < +\infty.\end{equation}
Now, let us define a sequence of nonnegative numbers by
\begin{equation}\label{eq:Conv_vNorm_sum}c_k := \|\mathbf{v}^k\|^2_{L^2} + \|\mathbf{v}^{k+1}\|^2_{L^2} \quad \text{for all }\, k \geq 1,\end{equation}
and notice that by \eqref{eq:Nesterov-or-faster} we have
\[\frac{c_k}{k+1} = \frac{\|\mathbf{v}^k\|^2_{L^2}}{k+1} + \frac{\|\mathbf{v}^{k+1}\|^2_{L^2}}{k+1} < \frac{\|\mathbf{v}^k\|^2_{L^2}}{k} + \frac{\|\mathbf{v}^{k+1}\|^2_{L^2}}{k+1} \leq \frac{\eta^k}{\omega}\|\mathbf{v}^k\|^2_{L^2} + \frac{\eta^{k+1}}{\omega}\|\mathbf{v}^{k+1}\|^2_{L^2}.\]
This estimate, together with \eqref{eq:pseudo_energy_decay_series}, yields
\begin{equation}\label{eq:gluedsum}\sum_{k=1}^\infty \frac{c_k}{k+1} \leq \frac{2}{\omega} \sum_{k=1}^\infty \eta^k \|\mathbf{v}^k\|^2_{L^2} < + \infty.\end{equation}
Since $c_k\geq 0$, we conclude immediately that \(\underset{k\to \infty}{\liminf}\; c_k = 0\). Hence, we obtain a subsequence \(k_\ell\) such that
\begin{equation*}c_{k_\ell}= \|\mathbf{v}^{k_\ell}\|^2_{L^2} + \|\mathbf{v}^{k_\ell+1}\|^2_{L^2} \xrightarrow[\ell \to \infty]{} 0, \ \text{ so }\  \|\mathbf{v}^{k_\ell}\|^2_{L^2} \xrightarrow[\ell \to \infty]{} 0 \ \text{ and }\  \|\mathbf{v}^{k_\ell+1}\|^2_{L^2} \xrightarrow[\ell \to \infty]{} 0,\end{equation*}
implying that
\begin{equation}\label{eq:conv-v2k-L2}
\mathbf{v}^{k_\ell} \xrightarrow[\ell \to \infty]{} 0 \; \text{ and }\;
\mathbf{v}^{k_\ell+1} \xrightarrow[\ell \to \infty]{} 0 \quad \text{ strongly in }L^2(\Omega,\mathbb{R}^N).
\end{equation}
On the other hand, by \eqref{eq:tau-exploding-h2-estimate}, we have that the sequences $\mathbf{u}^{k_\ell+1}$, $\mathbf{v}^{k_\ell} = (\mathbf{u}^{k_\ell}-\mathbf{u}^{k_\ell-1})/\tau$ and $\mathbf{v}^{k_\ell+1} = (\mathbf{u}^{k_\ell+1}-\mathbf{u}^{k_\ell})/\tau$ are bounded in $H^2(\Omega, \mathbb{R}^N)$. Applying the Banach-Alaoglu theorem and the compact embedding $H^2(\Omega, \mathbb{R}^N) \subset H^1(\Omega, \mathbb{R}^N)$, we can extract a further common subsequence $k_{\ell_m}$ and three limit functions $\mathbf{u}^\infty, \mathbf{v}_0^\infty, \mathbf{v}_1^\infty \in H^2(\Omega, \mathbb{R}^N) \cap H^1_0(\Omega, \mathbb{R}^N)$ such that
\begin{equation}\label{eq:compactness-u-v}
\left.\begin{aligned}
\mathbf{u}^{k_{\ell_m}+1} &\xrightarrow[m \to \infty]{} \mathbf{u}^\infty \,\;\\
\mathbf{v}^{k_{\ell_m}} &\xrightarrow[m \to \infty]{} \mathbf{v}_0^\infty \,\;\\
\mathbf{v}^{k_{\ell_m}+1} &\xrightarrow[m \to \infty]{} \mathbf{v}_1^\infty \,\;   
\end{aligned}\right\}
\quad \text{ strongly in }H^1(\Omega,\mathbb{R}^N),
\end{equation}
which combined with \eqref{eq:conv-v2k-L2} means that we must have $\mathbf{v}_0^\infty = \mathbf{v}_1^\infty = 0$. We can then pass to the limit of this subsequence in the weak formulation of \eqref{eq:cs-1}-\eqref{eq:cs-3} or \eqref{eq:combined-cs} (taking into account the definition of $\mathbf{v}^k$), such that for all $\mathbf{z} \in H^1_0(\Omega, \mathbb{R}^N)$
\begin{equation*}
 \int_\Omega \big( \delta_{\mathbf{u}} E_c(\mathbf{u}^\infty) - \delta_\mathbf{u} E_e(\mathbf{u}^\infty)\big)\cdot \mathbf{z}\, \mathrm{d} \mathbf{x}
 = a\left(\mathbf{u}^\infty,\mathbf{z}\right) +\left(f_c(\mathbf{u}^\infty) - f_e(\mathbf{u}^\infty), \mathbf{z}\right) = 0,
\end{equation*}
which is the stationarity condition for $E$ at $\mathbf{u}^\infty$ with respect to perturbations in $H^1_0(\Omega, \mathbb{R}^N)$.
\end{proof}

As the timestep size approaches zero and with initial values \(\mathbf{u}^0, \mathbf{u}^1 \in H^2(\Omega,\mathbb{R}^N)\), in the next section, we obtain some timestep-independent estimates, thereby proving the well-posedness of the PDE solution they approximate. Given that our goal is to develop a numerical scheme for approaching stationary points of certain functionals, the regularity of initial values is not an issue. Using similar methodology,  in Section~\ref{sec:second-order}, we provide results on energy stability and subsequence convergence to stationary points for the second-order convex splitting scheme, however, other results as stated for the first-order scheme will be omitted for the second-order one.

\section{Timestep-independent estimates and well-posedness of the PDE}\label{sec:well-posedness}
In this section, we prove the existence and uniqueness of a  solution of the second-order flow equation \eqref{eq:damped_hyperbolic_equation}. We provide first an estimate that will be needed to recover solutions of the PDE by letting the timestep $\tau=T/n \to 0^+$ (i.e. $n\to\infty$) for every $T\in\mathbb{R}^+$.

\begin{lemma}\label{lemma:priori-estimates_h2}
Let \( \mathbf{u}^0 \in H^2(\Omega, \mathbb{R}^N) \cap H_0^1(\Omega, \mathbb{R}^N) \).  For any fixed \( T > 0 \), define the time step size as \( \tau = {T}/{n} \), where \( n  \) is a positive integer.  Then, for all \( n \geq 1 \) and \( k = 1, \dots, n \), the following estimate holds:
    \begin{align}\label{eq:semi-discrete-estimate2}
        \| \mathbf{u}^k \|_{H^2} + \| \mathbf{v}^k \|_{H^1} + \left\| \frac{\mathbf{v}^k - \mathbf{v}^{k-1}}{\tau} \right\|_{L^{2}} \leq  C(T),
    \end{align}
    where \( C(T) > 0 \) is a constant independent of \( \tau \) but dependent on \( \| \mathbf{u}^0 \|_{H^2} \) and \( T \).
\end{lemma}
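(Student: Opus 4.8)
The plan is to upgrade the $\tau$-independent bounds of Lemma~\ref{lemma:priori-estimates_h1} by one order in both space and the discrete time-derivative, exactly as one would at the continuous level by differentiating \eqref{eq:damped_hyperbolic_equation} in time and testing the resulting equation for $\mathbf{v}=\partial_t\mathbf{u}$ against $\partial_t\mathbf{v}$. Writing $\mathbf{a}^k := (\mathbf{v}^k-\mathbf{v}^{k-1})/\tau$ for the discrete acceleration, the target \eqref{eq:semi-discrete-estimate2} amounts to bounding $\|\mathbf{a}^k\|_{L^2}$, $\|\nabla \mathbf{v}^k\|_{L^2}$, and (via elliptic regularity) $\|\mathbf{u}^k\|_{H^2}$, uniformly in $\tau=T/n$ but allowing dependence on $T$.

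First I would difference the scheme in time: subtracting \eqref{eq:combined-cs} at index $k$ from the same at $k+1$ and dividing by $\tau$ gives a discrete evolution for $\mathbf{v}$,
\[
\frac{\mathbf{a}^{k+1}-\mathbf{a}^k}{\tau} + \frac{\eta^{k+1}\mathbf{v}^{k+1}-\eta^k\mathbf{v}^k}{\tau}
= \Delta \mathbf{v}^{k+1} - \frac{f_c(\mathbf{u}^{k+1})-f_c(\mathbf{u}^k)}{\tau} + \frac{f_e(\mathbf{u}^k)-f_e(\mathbf{u}^{k-1})}{\tau},
\]
where I used $\mathbf{u}^{k+1}-\mathbf{u}^k=\tau\mathbf{v}^{k+1}$. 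Testing this in $L^2$ with $\mathbf{a}^{k+1}$ and using the elementary identity $(\mathbf{p}-\mathbf{q})\cdot\mathbf{p}\geq\tfrac12(|\mathbf{p}|^2-|\mathbf{q}|^2)$ together with $(\Delta\mathbf{v}^{k+1},\mathbf{a}^{k+1})=-(\nabla\mathbf{v}^{k+1},(\nabla\mathbf{v}^{k+1}-\nabla\mathbf{v}^k)/\tau)$ produces the telescoping dissipative structure for the discrete energy $\tfrac12(\|\mathbf{a}^{k+1}\|_{L^2}^2+\|\nabla\mathbf{v}^{k+1}\|_{L^2}^2)$, with the damping contribution splitting as $\eta^{k+1}\|\mathbf{a}^{k+1}\|_{L^2}^2 + \frac{\eta^{k+1}-\eta^k}{\tau}(\mathbf{v}^k,\mathbf{a}^{k+1})$, the first summand being non-negative and thus usable as extra dissipation.

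The heart of the argument is to bound the three right-hand source terms after testing. The concave part is linear, $f_e(\mathbf{u})=\beta\mathbf{u}$, so $(f_e(\mathbf{u}^k)-f_e(\mathbf{u}^{k-1}))/\tau=\beta\mathbf{v}^k$ is controlled in $L^2$ by Lemma~\ref{lemma:priori-estimates_h1}. The damping-variation term is handled by $|\eta^{k+1}-\eta^k|/\tau\leq L$ (bounded whenever $\eta$ is Lipschitz on $[0,T]$, consistent with the standing boundedness hypotheses on $\eta$) and the $L^2$ bound on $\mathbf{v}^k$. The crucial term is the cubic one: writing $f_c(\mathbf{u}^{k+1})-f_c(\mathbf{u}^k)=\alpha(|\mathbf{u}^{k+1}|^2\mathbf{u}^{k+1}-|\mathbf{u}^k|^2\mathbf{u}^k)$ as $\tau\,M^k\mathbf{v}^{k+1}$ with $|M^k|\leq C(|\mathbf{u}^{k+1}|^2+|\mathbf{u}^k|^2)$, the $d\leq 3$ embedding $H^1\hookrightarrow L^6$ and the uniform $H^1$ bound of Lemma~\ref{lemma:priori-estimates_h1} give $\|M^k\mathbf{v}^{k+1}\|_{L^2}\leq C(\|\mathbf{u}^{k+1}\|_{H^1}^2+\|\mathbf{u}^k\|_{H^1}^2)\|\mathbf{v}^{k+1}\|_{L^6}\leq C\|\mathbf{v}^{k+1}\|_{H^1}$. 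Young's inequality then absorbs every $\|\mathbf{a}^{k+1}\|_{L^2}^2$ factor into the dissipation and leaves, besides bounded quantities, only a term $C\|\nabla\mathbf{v}^{k+1}\|_{L^2}^2$ matching the energy.

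Summing over $k=1,\dots,m-1$ and invoking a discrete Grönwall inequality (with the number of steps $n=T/\tau$ entering only through a factor of type $e^{CT}$) then yields the $\tau$-independent bound on $\max_k(\|\mathbf{a}^k\|_{L^2}+\|\nabla\mathbf{v}^k\|_{L^2})$. The initialization is provided by the first step: testing it (with $\mathbf{v}^0=0$) shows that $\|\mathbf{a}^1\|_{L^2}$ and $\|\nabla\mathbf{v}^1\|_{L^2}=\tau\|\nabla\mathbf{a}^1\|_{L^2}$ are bounded uniformly in $\tau$ using $\mathbf{u}^0\in H^2$ and Lemma~\ref{lemma:priori-estimates_h1}. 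Finally, rewriting \eqref{eq:combined-cs} as $\Delta\mathbf{u}^k=\mathbf{a}^k+\eta^k\mathbf{v}^k+f_c(\mathbf{u}^k)-f_e(\mathbf{u}^{k-1})$, whose right-hand side is now bounded in $L^2$, standard $H^2$ elliptic regularity on the $C^2$ domain gives the remaining bound $\|\mathbf{u}^k\|_{H^2}\leq C(T)$. I expect the main obstacle to be precisely the uniform $L^2$ control of the cubic difference quotient, and the bookkeeping needed to close the discrete Grönwall estimate without picking up $\tau$-dependent constants.
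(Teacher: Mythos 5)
Your route is genuinely different from the paper's, and most of it is sound. The paper never differences the scheme in time: it tests \eqref{eq:combined-cs} directly with $-\Delta(\mathbf{u}^{k+1}-\mathbf{u}^{k})=-\tau\Delta\mathbf{v}^{k+1}$, which yields the telescoping energy $F_1^k=\tfrac12\|\Delta\mathbf{u}^k\|_{L^2}^2+\tfrac12\|\nabla\mathbf{v}^k\|_{L^2}^2$; the cubic term is then handled after integration by parts via Gagliardo--Nirenberg $L^\infty$ interpolation and elliptic regularity, the discrete Gronwall inequality closes the estimate, and only afterwards is the acceleration bound $\|(\mathbf{v}^k-\mathbf{v}^{k-1})/\tau\|_{L^2}$ read off from the equation as in \eqref{eq:third-estimate-calculation}. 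You instead difference the scheme, test with $\mathbf{a}^{k+1}$, work with the energy $\tfrac12\|\mathbf{a}^k\|_{L^2}^2+\tfrac12\|\nabla\mathbf{v}^k\|_{L^2}^2$, and recover $\|\mathbf{u}^k\|_{H^2}$ last by elliptic regularity. Your treatment of the cubic term is actually simpler (only H\"older and $H^1\hookrightarrow L^6$, no $L^\infty$ estimates), and your initialization of $\|\mathbf{a}^1\|_{L^2}$ and $\|\nabla\mathbf{v}^1\|_{L^2}=\tau\|\nabla\mathbf{a}^1\|_{L^2}$ from the first step with $\mathbf{u}^0\in H^2$ does work. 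One phrasing issue: the $\|\mathbf{a}^{k+1}\|_{L^2}^2$ terms produced by Young carry a factor $\tau$ and are absorbed by the Gronwall argument, not by the dissipation $\tau\eta^{k+1}\|\mathbf{a}^{k+1}\|_{L^2}^2$, which is far too weak to absorb an $O(1)$ multiple of $\|\mathbf{a}^{k+1}\|_{L^2}^2$; this is cosmetic and does not affect the outcome.

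The genuine gap is the damping term. Differencing creates $\bigl(\eta^{k+1}\mathbf{v}^{k+1}-\eta^{k}\mathbf{v}^{k}\bigr)/\tau$, whose problematic part after testing is $\frac{\eta^{k+1}-\eta^{k}}{\tau}\,(\mathbf{v}^{k},\mathbf{a}^{k+1})$, and your argument needs $|\eta^{k+1}-\eta^{k}|\leq L\tau$. This is \emph{not} implied by the paper's standing hypotheses (positivity and an upper bound on $\eta^k$, as in Lemma~\ref{lemma:priori-estimates_h1} and \eqref{eq:Nesterov-or-faster}): Lipschitz continuity is strictly stronger than boundedness, and it fails precisely for the damping of principal interest in the paper, $\eta(t)=3/t$, for which
\begin{equation*}
|\eta^{k+1}-\eta^{k}| \;=\; \frac{3}{k(k+1)\tau},
\end{equation*}
so the discrete derivative is of order $1/(k^2\tau^2)$ and blows up as $\tau\to0$. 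Even exploiting $\mathbf{v}^0=\mathbf{0}$, so that $\|\mathbf{v}^k\|_{L^2}\leq k\tau\max_j\|\mathbf{a}^j\|_{L^2}$, the summed contribution of these terms is only bounded by $C\log n\,\max_j\|\mathbf{a}^j\|_{L^2}^2$ with a fixed constant, and the Gronwall argument does not close uniformly in $\tau=T/n$; using instead the uniform bound $\|\mathbf{v}^k\|_{L^2}\leq C$ produces an $O(1/\tau)$ right-hand side, which is no better. The paper's proof avoids this difficulty entirely because $\eta$ is never differentiated: in the un-differenced tested equation the damping appears as $\tau\eta^{k+1}\|\nabla\mathbf{v}^{k+1}\|_{L^2}^2\geq 0$ and is simply discarded, so no continuity of $t\mapsto\eta(t)$ is required at all. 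As written, your proof is therefore valid only under an additional explicit hypothesis (e.g.\ $\eta$ Lipschitz and bounded on $[0,T]$, as for constant damping); to cover the paper's setting you would need either to impose that assumption or to switch to the paper's test function $-\Delta(\mathbf{u}^{k+1}-\mathbf{u}^k)$.
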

\begin{proof}
We start by taking the inner product of equation \eqref{eq:combined-cs} with \( -\Delta(\mathbf{u}^{k+1}-\mathbf{u}^{k}) \in L^2(\Omega, \mathbb{R}^N)\), which yields:
\begin{align}
-\frac{1}{\tau^2}\left(\mathbf{u}^{k+1}-2\mathbf{u}^k+\mathbf{u}^{k-1}, \Delta(\mathbf{u}^{k+1}-\mathbf{u}^k)\right) - \frac{\eta^{k+1}}{\tau}\left(\mathbf{u}^{k+1}-\mathbf{u}^k, \Delta(\mathbf{u}^{k+1}-\mathbf{u}^k)\right)   &\nonumber \\
+\left(\Delta \mathbf{u}^{k+1}, \Delta(\mathbf{u}^{k+1}-\mathbf{u}^k)\right) = \left(f_c(\mathbf{u}^{k+1})-f_e(\mathbf{u}^k), \Delta(\mathbf{u}^{k+1}-\mathbf{u}^k)\right). & \label{eq:1st-combinition}
\end{align}
We can handle the terms on the left-hand side one by one as follows:
\begin{align}
&-\frac{1}{\tau^2}\left(\mathbf{u}^{k+1}-2\mathbf{u}^k+\mathbf{u}^{k-1}, \Delta(\mathbf{u}^{k+1}-\mathbf{u}^k)\right) \nonumber\\
&\qquad\qquad = \frac{1}{2}\left(\|\nabla \mathbf{v}^{k+1}\|_{L^2}^2 - \|\nabla \mathbf{v}^k\|_{L^2}^2 +\|\nabla (\mathbf{v}^{k+1}-\mathbf{v}^k)\|_{L^2}^2\right), \label{eq:1st-term} \\
&- \frac{\eta^{k+1}}{\tau}\left(\mathbf{u}^{k+1}-\mathbf{u}^k, \Delta(\mathbf{u}^{k+1}-\mathbf{u}^k)\right) = \tau \eta^{k+1}\|\nabla \mathbf{v}^{k+1}\|_{L^2}^2, \label{eq:2nd-term} \\
&\left(\Delta \mathbf{u}^{k+1}, \Delta(\mathbf{u}^{k+1}-\mathbf{u}^k)\right) \geq \frac{1}{2}\left(\|\Delta \mathbf{u}^{k+1}\|_{L^2}^2 - \|\Delta \mathbf{u}^{k}\|_{L^2}^2\right). \label{eq:3rd-term}
\end{align}
For the nonlinear terms, we have
\begin{align}
&\left(f_c(\mathbf{u}^{k+1}) - f_e(\mathbf{u}^{k}), \Delta(\mathbf{u}^{k+1}-\mathbf{u}^k)\right) \nonumber\\
&\qquad\qquad = \tau\left(\alpha\left(|\mathbf{u}^{k+1}|^2\nabla \mathbf{u}^{k+1}+\mathbf{u}^{k+1}(\mathbf{u}^{k+1})^T\nabla \mathbf{u}^{k+1}\right) - \beta\nabla \mathbf{u}^{k}, \nabla \mathbf{v}^{k+1}\right) \nonumber \\
&\qquad\qquad \leq C\tau\|\mathbf{u}^{k+1}\|_{L^\infty}^4\|\nabla \mathbf{u}^{k+1}\|_{L^2}^2 + C\tau\|\nabla \mathbf{u}^{k}\|_{L^2}^2 + \tau\|\nabla \mathbf{v}^{k+1}\|_{L^2}^2. \label{eq:4th-term1}
\end{align}
Utilizing the Gagliardo-Nirenberg inequalities (see, e.g., \cite[Thm.~5.9]{AdaFou03}), 
\begin{align*}
\|\mathbf{u}\|_{L^\infty} &\leq C\|\mathbf{u}\|_{H^2}^{1/2}\|\mathbf{u}\|_{L^6}^{1/2},\quad d=3, \\
\|\mathbf{u}\|_{L^\infty} &\leq C\|\mathbf{u}\|_{H^2}^{1/2}\|\mathbf{u}\|_{L^2}^{1/2},\quad d=2, \\
\|\mathbf{u}\|_{L^\infty} &\leq C\|\mathbf{u}\|_{H^1}^{1/2}\|\mathbf{u}\|_{L^2}^{1/2},\quad d=1, 
\end{align*}
and the Sobolev embeddings, we obtain
\begin{align*}
\|\mathbf{u}^{k+1}\|_{L^\infty}^4 \leq C \|\mathbf{u}^{k+1}\|_{H^2}^{2}\|\mathbf{u}^{k+1}\|_{H^1}^{2}\quad (d=1,2,3). 
\end{align*}

Incorporating this into \eqref{eq:4th-term1}, considering the uniform boundedness of \(\|\mathbf{u}^{k}\|_{H^1}\) and \(\|\mathbf{u}^{k+1}\|_{H^1}\), and using the $H^2$ regularity estimate
\begin{equation*}
\|\mathbf{u}^{k+1}\|_{H^2} \leq C\big(\|\Delta \mathbf{u}^{k+1}\|_{L^2} + \|\mathbf{u}^{k+1}\|_{L^2}\big),
\end{equation*}
we derive
\begin{align}
\left(f_c(\mathbf{u}^{k+1}) - f_e(\mathbf{u}^{k}), \Delta(\mathbf{u}^{k+1}-\mathbf{u}^k)\right) \leq C\tau\|\Delta \mathbf{u}^{k+1}\|_{L^2}^2 + \tau\|\nabla \mathbf{v}^{k+1}\|_{L^2}^2 + C\tau. \label{eq:4th-term}
\end{align}

Defining $F_1^k := \frac{1}{2}\|\Delta \mathbf{u}^k\|_{L^2}^2 + \frac{1}{2}\|\nabla \mathbf{v}^k\|_{L^2}^2$,
and consolidating \eqref{eq:1st-combinition}-\eqref{eq:4th-term}, we obtain
$$
F_1^{k+1} - F_1^k \leq C\tau\|\Delta\mathbf{u}^{k+1}\|_{L^2}^2 + \tau\|\nabla \mathbf{v}^{k+1}\|_{L^2}^2 + C\tau
\leq C\tau F_1^{k+1} +C\tau.
$$
Summing over \(k\) and recognizing that \(F_1^0 \leq C\left\|\mathbf{u}^0\right\|_{H^2}^2\) (given \( \mathbf{v}^0 \equiv 0 \)), we infer
\begin{align*}
F_1^{k} \leq C\left(\left\|\mathbf{u}^0\right\|_{H^2}^2 +T\right) + C\tau \sum_{j=1}^{k} F_1^{j}.
\end{align*}
Utilizing the discrete Gronwall inequality (see, e.g., \cite[Lem.~100]{Dra03}) yields $F_1^{k} \leq C(T)$. This substantiates the first two estimates in Equation \eqref{eq:semi-discrete-estimate2}. The third estimate emerges from a calculation analogous to that in \eqref{eq:third-estimate-calculation}.
\end{proof}

Armed with the estimates on the semi-discrete flows, we now prove the existence and uniqueness of a global strong solution to the continuous second-order flow. 

\begin{theorem}\label{theorem:strong-wellposedness}
Let \(  \mathbf{u}^0 \in H^2(\Omega, \mathbb{R}^N)\cap H_0^1(\Omega, \mathbb{R}^N) \) and $T \in \mathbb{R}^+$. Then with initial conditions \( \mathbf{u}(\cdot, 0) \equiv \mathbf{u}^0 \) and \( \partial_t \mathbf{u}(\cdot, 0) \equiv \mathbf{0} \), there exists a unique
\begin{equation}
    \mathbf{u} \in L^{\infty}(0, T; H^2\cap H_0^1)\cap W^{1,\infty}(0, T; H^1) \cap W^{2,\infty}(0, T; L^{2})
\end{equation}
which satisfies equation \eqref{eq:damped_hyperbolic_equation}.
\end{theorem}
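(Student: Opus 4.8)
The plan is to obtain existence by passing to the limit $\tau = T/n \to 0$ in the semi-discrete scheme, relying on the timestep-independent bounds of Lemma~\ref{lemma:priori-estimates_h2}, and to obtain uniqueness by a standard energy estimate on the difference of two solutions. First I would associate to each discrete sequence $\{\mathbf{u}^k\}_{k=0}^{n}$ the piecewise-linear-in-time interpolant $\mathbf{u}_\tau$, whose time derivative $\partial_t \mathbf{u}_\tau$ equals the piecewise-constant interpolant $\mathbf{v}_\tau$ of the velocities, together with the two piecewise-constant interpolants $\overline{\mathbf{u}}_\tau$ and $\underline{\mathbf{u}}_\tau$ taking the values $\mathbf{u}^{k+1}$ and $\mathbf{u}^{k}$, respectively. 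Rewriting \eqref{eq:semi-discrete-estimate2} in terms of these interpolants yields, uniformly in $\tau$, that $\mathbf{u}_\tau$ is bounded in $L^\infty(0,T;H^2 \cap H_0^1)$, that $\partial_t \mathbf{u}_\tau$ is bounded in $L^\infty(0,T;H^1)$, and that the discrete second difference, hence $\partial_{tt}\mathbf{u}_\tau$, is bounded in $L^\infty(0,T;L^2)$. These are precisely the three bounds matching the asserted regularity class.

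By Banach--Alaoglu I would extract a subsequence along which $\mathbf{u}_\tau$ converges weakly-$*$ to some $\mathbf{u}$ in $L^\infty(0,T;H^2)$, with $\partial_t \mathbf{u}_\tau$ converging weakly-$*$ to $\partial_t \mathbf{u}$ in $L^\infty(0,T;H^1)$ and $\partial_{tt}\mathbf{u}_\tau$ weakly-$*$ to $\partial_{tt}\mathbf{u}$ in $L^\infty(0,T;L^2)$, which already places $\mathbf{u}$ in the required space. To pass to the limit in the nonlinearity I would invoke an Aubin--Lions--Simon compactness argument: the bound on $\mathbf{u}_\tau$ in $L^\infty(0,T;H^2)$ together with the bound on $\partial_t \mathbf{u}_\tau$ in $L^\infty(0,T;H^1)$ gives strong convergence $\mathbf{u}_\tau \to \mathbf{u}$ in $C(0,T;H^1)$, hence in $C(0,T;L^6)$ since $d \leq 3$. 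The uniform velocity bound shows $\|\overline{\mathbf{u}}_\tau - \mathbf{u}_\tau\|_{L^\infty(0,T;H^1)}$ and $\|\underline{\mathbf{u}}_\tau - \mathbf{u}_\tau\|_{L^\infty(0,T;H^1)}$ are $O(\tau)$, so both piecewise-constant interpolants converge to the same $\mathbf{u}$; consequently the time-lag in the splitting $f_c(\mathbf{u}^{k+1}) - f_e(\mathbf{u}^k)$ disappears and $f_c(\overline{\mathbf{u}}_\tau) - f_e(\underline{\mathbf{u}}_\tau) \to f_c(\mathbf{u}) - f_e(\mathbf{u}) = f(\mathbf{u})$ strongly in $L^2$, using the continuity of the cubic map from $L^6$ to $L^2$. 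Passing to the limit in the weak formulation of \eqref{eq:combined-cs} then shows $\mathbf{u}$ solves \eqref{eq:damped_hyperbolic_equation}, while the initial conditions survive the limit because $\mathbf{u}_\tau(0) = \mathbf{u}^0$, $\partial_t \mathbf{u}_\tau(0) = \mathbf{v}^0 \equiv \mathbf{0}$, and the convergence is strong in $C(0,T;H^1)$.

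For uniqueness I would take two solutions $\mathbf{u}_1, \mathbf{u}_2$ with identical data and set $\mathbf{w} = \mathbf{u}_1 - \mathbf{u}_2$, which satisfies $\partial_{tt}\mathbf{w} + \eta \partial_t \mathbf{w} = \Delta \mathbf{w} - (f(\mathbf{u}_1) - f(\mathbf{u}_2))$. Testing with $\partial_t \mathbf{w} \in H_0^1$ and setting $\mathcal{G}(t) = \frac12\|\partial_t \mathbf{w}\|_{L^2}^2 + \frac12\|\nabla \mathbf{w}\|_{L^2}^2 + \frac12\|\mathbf{w}\|_{L^2}^2$, the damping term contributes a favourable sign, and since $\mathbf{u}_1, \mathbf{u}_2 \in L^\infty(0,T;H^2) \hookrightarrow L^\infty(0,T;L^\infty)$ the polynomial $f$ is Lipschitz on the relevant bounded set, so $\|f(\mathbf{u}_1) - f(\mathbf{u}_2)\|_{L^2} \leq C\|\mathbf{w}\|_{L^2}$. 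Cauchy--Schwarz and Young's inequality then give $\mathcal{G}'(t) \leq C\,\mathcal{G}(t)$; since $\mathcal{G}(0) = 0$, Gronwall's inequality forces $\mathbf{w} \equiv 0$.

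The main obstacle is the passage to the limit in the nonlinear term: one must ensure that the interpolants built from $\mathbf{u}^{k-1}, \mathbf{u}^{k}, \mathbf{u}^{k+1}$ all converge to a common limit and that the compactness is strong enough in an $L^p$ space where the cubic nonlinearity is continuous. This is exactly where the $\tau$-independent $H^2$ estimate of Lemma~\ref{lemma:priori-estimates_h2} is indispensable, since the merely $\tau$-dependent $H^2$ control of Theorem~\ref{theorem3:unique-solvability} would not survive $\tau \to 0$. A secondary technical point is to verify that the weak-$*$ limit of the discrete second differences coincides with the genuine distributional $\partial_{tt}\mathbf{u}$, which follows from the uniform $L^\infty(0,T;L^2)$ bound once the first-order convergence $\partial_t \mathbf{u}_\tau \to \partial_t \mathbf{u}$ has been established.
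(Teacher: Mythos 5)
Your proposal is correct in outline and, like the paper, rests entirely on the $\tau$-independent estimates of Lemma~\ref{lemma:priori-estimates_h2} applied to time-interpolants of the Rothe sequence; your uniqueness argument (test the difference equation with $\partial_t\mathbf{w}$, use $L^\infty(0,T;H^2)\hookrightarrow L^\infty(0,T;L^\infty)$ to make $f$ Lipschitz on the relevant set, then Gronwall) is essentially identical to the paper's, which defers the computation to \cite[Thm. 5.3]{WanWis10}. Where you genuinely diverge is existence. You extract a weak-$*$ convergent subsequence by Banach--Alaoglu and upgrade to strong convergence in $C(0,T;H^1)$ via an Aubin--Lions--Simon compactness argument, which suffices to pass to the limit in the cubic term through $H^1\hookrightarrow L^6$. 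The paper instead proves a quantitative Cauchy estimate between two Rothe sequences, \eqref{eq:uniform}, by a Gronwall argument in the spirit of \cite[Lemma 3]{kav84}: this yields uniform convergence of the \emph{whole} sequence, $\mathbf{u}_n\to\mathbf{u}$ in $C(0,T;H_0^1)$ and $\mathbf{v}_n\to\mathbf{v}$ in $C(0,T;L^2)$, with an explicit rate $O(1/n_1+1/n_2)$ and no appeal to a compactness theorem. Your route is more standard and avoids proving the Cauchy estimate, but it delivers only subsequential convergence of the interpolants --- enough for existence of a solution, yet weaker as a statement about the numerical scheme; the paper's route simultaneously certifies that the semi-discrete solutions themselves converge, with a rate, to the PDE solution.

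One technical point needs tightening: the second time-derivative of a piecewise-linear-in-time interpolant is a sum of Dirac measures at the nodes, not a function, so ``$\partial_{tt}\mathbf{u}_\tau$ bounded in $L^\infty(0,T;L^2)$'' is not literally meaningful as you set things up. The standard fix --- and the one the paper uses --- is to interpolate the velocities separately: introduce the piecewise-linear interpolant $\mathbf{v}_\tau$ of $\{\mathbf{v}^k\}$, so that $\partial_t\mathbf{v}_\tau$ is the piecewise-constant interpolant of the accelerations $(\mathbf{v}^{k+1}-\mathbf{v}^k)/\tau$, which \eqref{eq:semi-discrete-estimate2} bounds in $L^\infty(0,T;L^2)$; one then passes to the limit in $\mathbf{v}_\tau$ and $\partial_t\mathbf{v}_\tau$ and identifies $\mathbf{v}=\partial_t\mathbf{u}$ from the relation $\mathbf{u}_\tau(\cdot,t)=\mathbf{u}^0+\int_0^t\bar{\mathbf{v}}_\tau(\cdot,s)\,\mathrm{d}s$, exactly as in the paper's proof. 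You flag this identification issue at the end of your proposal, so the gap is one of bookkeeping rather than substance, but as written your single-interpolant framework does not support the three uniform bounds you claim for it.
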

\begin{proof}
{\bf Existence:} The proof of existence utilizes the basic idea of Rothe’s method \cite{rot30}, which is based on constructing numerical solutions and leveraging the corresponding a priori estimates. 

Define \( \tau = {T}/{n} \), where \( n \) is a positive integer, and let \( t_k = k\tau \) for \( k = 0, \ldots, n \). For each \( k = 0, \ldots, n-1 \), let \( \mathbf{u}^{k+1} \) denote the unique solution of the semi-discrete scheme \eqref{eq:cs-1}-\eqref{eq:cs-3}. Define \(\mathbf{v}^{k+1} = (\mathbf{u}^{k+1} - \mathbf{u}^{k}) / \tau\) and \(\mathbf{a}^{k+1} = (\mathbf{v}^{k+1} - \mathbf{v}^{k}) / \tau\) and construct the Rothe sequences \(\mathbf{u}_{n}\) and \(\mathbf{v}_{n}\) as follows:
\[
\begin{aligned}
\mathbf{u}_{n}(\mathbf{x}, t) &:= 
\begin{cases} 
\mathbf{u}^{k}(\mathbf{x}) + (t - t_k) \mathbf{v}^{k+1}, & t_k \leq t < t_{k+1}, \\ 
\mathbf{u}^{n}(\mathbf{x}), & t = t_{n},
\end{cases} \\
\mathbf{v}_{n}(\mathbf{x}, t) &:= 
\begin{cases} 
\mathbf{v}^{k}(\mathbf{x}) + (t - t_k) \mathbf{a}^{k+1}, & t_k \leq t < t_{k+1}, \\ 
\mathbf{v}^{n}(\mathbf{x}), & t = t_{n}.
\end{cases}
\end{aligned}
\]
Here, \( \mathbf{u}_n(\mathbf{x}, t) \) and \( \mathbf{v}_n(\mathbf{x}, t) \) represent piecewise linear interpolations of the numerical solution sequences. Define the piecewise constant (in time) interpolants $\bar{\mathbf{u}}_n(\mathbf{x}, t)$, $\bar{\mathbf{v}}_n(\mathbf{x}, t)$, $\bar{\mathbf{a}}_n(\mathbf{x}, t)$, $\bar{\eta}_n(t)$ such that
\[ \bar{\gamma}_n = \gamma^{k+1} \;\;\mbox{if}\;\; t\in(t_k,t_{k+1}],\;\; k=0,\ldots,n-1,\quad \mbox{for}\; \gamma=\mathbf{u},\mathbf{v},\mathbf{a},\eta. \]
Additionally, we define $\hat{\mathbf{u}}_n(\mathbf{x}, t)$ such that 
\[\hat{\mathbf{u}}_n(\mathbf{x}, t) =\mathbf{u}^{k}(\mathbf{x})\;\;\mbox{if}\;\; t\in(t_k,t_{k+1}],\;\; k=0,\ldots,n-1.\]
This leads to the relationships $\frac{\partial^{-}}{\partial t}\mathbf{u}_{n}(\cdot,t)=\bar{\mathbf{v}}_{n}(\cdot,t)$ and $\frac{\partial^{-}}{\partial t}\mathbf{v}_{n}(\cdot,t)=\bar{\mathbf{a}}_{n}(\cdot,t)$ for all $t\in (0,T)$, where $\frac{\partial^{-}}{\partial t}$ denotes the left-hand derivative. Based on these definitions, the weak formulation of \eqref{eq:combined-cs} yields
\begin{align}\label{eq:piecewise_cs}
\left(\frac{\partial^{-}}{\partial t}\mathbf{v}_{n}(\cdot,t) + \bar{\eta}_{n}(t) \bar{\mathbf{v}}_{n}(\cdot,t) +f_c(\bar{\mathbf{u}}_{n}(\cdot,t)) - f_e(\hat{\mathbf{u}}_{n}(\cdot,t)), \mathbf{z} \right) + a(\bar{\mathbf{u}}_{n}(\cdot,t), \mathbf{z})  = 0,
\end{align}
for all $\mathbf{z}\in H_0^1(\Omega,\mathbb{R}^N)$, applicable for all $n\in\mathbb{Z}^+$ and $t\in (0,T)$. Drawing upon Lemma~\ref{lemma:priori-estimates_h2}, and the definition of $\bar{\mathbf{u}}_{n}$, $\hat{\mathbf{u}}_{n}$ and $\bar{\mathbf{v}}_{n}$, we establish their uniform boundedness as follows: 
\begin{align}
& \left\|\mathbf{u}_{n}(\cdot,t)\right\|_{H^2}+\left\|\mathbf{v}_{n}(\cdot,t)\right\|_{H^1}\leq C, \label{eq:theorem_estimate1} \\
& \|\bar{\mathbf{u}}_{n}(\cdot,t)\|_{H^2}+\|\hat{\mathbf{u}}_{n}(\cdot,t)\|_{H^2}+|\bar{\eta}_{n}|\left\|\bar{\mathbf{v}}_{n}(\cdot,t)\right\|_{H^1}+\left\|\bar{\mathbf{a}}_{n}(\cdot,t)\right\|_{L^2} \leq C, \label{eq:theorem_estimate2} \\
& \left\|\mathbf{u}_{n}(\cdot,t)-\bar{\mathbf{u}}_n(\cdot,t)\right\|_{H^1}+\left\|\mathbf{u}_{n}(\cdot,t)-\hat{\mathbf{u}}_n(\cdot,t)\right\|_{H^1}+\left\|\mathbf{v}_n(\cdot,t)-\bar{\mathbf{v}}_n(\cdot,t)\right\|_{L^2} \label{eq:theorem_estimate3}\\
&\qquad\qquad\qquad\qquad\qquad\qquad+\left\||\mathbf{u}_{n}(\cdot,t)|^2\mathbf{u}_{n}(\cdot,t)-|\bar{\mathbf{u}}_n(\cdot,t)|^2\bar{\mathbf{u}}_n(\cdot,t)\right\|_{L^2}\leq \frac{C}{n}, \nonumber 
\end{align}
for all $n\in\mathbb{Z}^+$, $t\in(0,T)$, as well as
\begin{equation}
\left\|\mathbf{u}_n(\cdot,t)-\mathbf{u}_n\left(\cdot,t^{\prime}\right)\right\|_{H^1}+\left\|\mathbf{v}_n(\cdot,t)-\mathbf{v}_n\left(\cdot,t^{\prime}\right)\right\|_{L^2} \leq C\left|t-t^{\prime}\right|,\label{eq:theorem_estimate4} 
\end{equation}
for all $n\in\mathbb{Z}^+$, $t,t^{\prime}\in(0,T)$. Using methods similar to those in the proof of \cite[Lemma 3]{kav84} with estimates \eqref{eq:theorem_estimate1}-\eqref{eq:theorem_estimate3}, and Gronwall's inequality, we obtain the following for arbitrary $n_1,\;n_2\in\mathbb{Z}^+$:
\begin{equation}\label{eq:uniform}
\left\|\mathbf{u}_{n_1}(\cdot,t)-\mathbf{u}_{n_2}\left(\cdot,t\right)\right\|_{H^1}+\left\|\mathbf{v}_{n_1}(\cdot,t)-\mathbf{v}_{n_2}\left(\cdot,t\right)\right\|_{L^2} \leq C_1
\left(\frac{1}{n_1}+\frac{1}{n_2}\right)\exp(C_2 T),
\end{equation}
where $C_1,\; C_2$ are some constants independent of $n_1$ and $n_2$.
Since $\{\mathbf{u}_n\}_{n\in\mathbb{Z}^+}\subset C(0, T; H_0^1)$ and $\{\mathbf{v}_n\}_{n\in\mathbb{Z}^+} \subset C(0, T; L^2)$, using \eqref{eq:uniform} we derive the uniform convergence of both $\{\mathbf{u}_n\}_{n\in\mathbb{Z}^+}$ and $\{\mathbf{v}_n\}_{n\in\mathbb{Z}^+}$ such that
\begin{align}\label{eq:converge_sequence}
\mathbf{u}_n \rightarrow \mathbf{u} \quad \text { in } \quad C(0, T; H_0^1), \quad \mathbf{v}_n \rightarrow \mathbf{v}\quad \text { in } \quad C(0, T; L^2).
\end{align}
From \eqref{eq:theorem_estimate2}, we deduce that \(\mathbf{u} \in L^\infty(0, T; H^2)\). By passing to the limit as \(n \to \infty\) in \eqref{eq:theorem_estimate4}, we have:
\[
\left\|\mathbf{v}(\cdot,t) - \mathbf{v}(\cdot,t')\right\|_{L^2}^2 + \left\|\mathbf{u}(\cdot,t) - \mathbf{u}(\cdot,t')\right\|_{H^1}^2 \leq C|t - t'|,
\]
which implies \(\frac{\partial \mathbf{v}}{\partial t} \in L^{\infty}(0,T; L^2)\) and \(\frac{\partial \mathbf{u}}{\partial t} \in L^{\infty}(0,T; H^1)\). 
Next, by passing to the limit as \(n \to \infty\) in the following identity 
\[
\mathbf{u}_n(\cdot,t) = \int_0^t \frac{\partial \mathbf{u}_n(\cdot,s)}{\partial t} \mathrm{d}s + \mathbf{u}_0 = \int_0^t \bar{\mathbf{v}}_n(\cdot,s) \mathrm{d}s + \mathbf{u}_0,
\]
we find that \(\frac{\partial \mathbf{u}}{\partial t} =  \mathbf{v} \in L^{\infty}(0,T; H^1)\), and \(\frac{\partial^2 \mathbf{u}}{\partial t^2} = \frac{\partial \mathbf{v}}{\partial t} \in L^{\infty}(0,T; L^2)\). Therefore, we conclude:
\[
\mathbf{u} \in L^{\infty}(0, T; H^2 \cap H_0^1) \cap W^{1,\infty}(0, T; H^1) \cap W^{2,\infty}(0, T; L^2).
\]
The rest is to argue that $\mathbf{u}$ is a solution of \eqref{eq:damped_hyperbolic_equation}. Integrating equation \eqref{eq:piecewise_cs} over the time interval $(0, t)$, we obtain for all $\mathbf{z}\in  H_0^1(\Omega, \mathbb{R}^N)$,
{\small
$$
\left(\mathbf{v}_{n}(t), \mathbf{z}\right)-\left(\mathbf{v}^{0}, \mathbf{z}\right)+\int_0^t \bar{\eta}_{n}(s)\left( \bar{\mathbf{v}}_{n}(s), \mathbf{z}\right)+a\left( \bar{\mathbf{u}}_n(s), \mathbf{z}\right) +\left(\alpha |\bar{\mathbf{u}}_{n}(s)|^2\bar{\mathbf{u}}_{n}(s) - \beta\hat{\mathbf{u}}_{n}(s), \mathbf{z}\right) \mathrm{d} s=0.
$$}
Together with \eqref{eq:theorem_estimate3} and \eqref{eq:converge_sequence}, and passing to the limit as $n \rightarrow \infty$, we have
\begin{align}\label{eq:sequence_limit}
\left(\mathbf{v}(t), \mathbf{z}\right)-\left(\mathbf{v}^{0}, \mathbf{z}\right)+\int_0^t \eta(s)\left( \mathbf{v}(s), \mathbf{z}\right)+a\left( \mathbf{u}(s), \mathbf{z}\right) +\left(\alpha |\mathbf{u}(s)|^2\mathbf{u}(s) - \beta\mathbf{u}(s), \mathbf{z}\right) \mathrm{d} s=0.
\end{align}
Differentiating equation \eqref{eq:sequence_limit} with respect to time $t$, we obtain equation \eqref{eq:damped_hyperbolic_equation}. 
Notably, $\mathbf{u}^0=\mathbf{u}_{n}(\cdot, 0)\rightarrow \mathbf{u}(\cdot, 0)$ and $\mathbf{0}=\mathbf{v}^0=\mathbf{v}_{n}(\cdot, 0)\rightarrow \partial_t\mathbf{u}(\cdot, 0)$, ensuring that the initial condition is satisfied.

{\bf Uniqueness:}  Let \( \mathbf{u}^{(1)} \) and \( \mathbf{u}^{(2)} \) be two strong solutions with the same initial data. Setting \( \tilde{\mathbf{u}} = \mathbf{u}^{(1)} - \mathbf{u}^{(2)} \), we substitute \( \mathbf{u}^{(1)} \) and \( \mathbf{u}^{(2)} \) into equation \eqref{eq:damped_hyperbolic_equation} separately and then subtract the resultant equations to generate a new equation on \( \tilde{\mathbf{u}} \).  By taking the inner product of this equation with the test function \( \tilde{\mathbf{v}} = \partial_t \tilde{\mathbf{u}} \), followed by integration by parts and constraining the nonlinearity through uniform bounds, we reach a formulation which allows for applying Gronwall's inequality. Given that both \( \tilde{\mathbf{u}} \) and \( \tilde{\mathbf{v}} \) start from zero, it dictates that \( \tilde{\mathbf{u}} \) remains identically zero throughout, thus proving the uniqueness. For the detailed steps and calculations for this proof we refer to \cite[Thm. 5.3]{WanWis10}, where similar arguments and techniques are employed in a closely related context.
\end{proof}

\begin{remark}
The theoretical analysis presented in this section also holds for inhomogeneous Dirichlet boundary conditions on \( \partial\Omega \). In that case, we consider the solution space to be an affine manifold \( H_0^1(\Omega, \mathbb{R}^N) + \{\mathbf{u}_b\} \), where \( \mathbf{u}_b \in H^1(\Omega, \mathbb{R}^N) \) is any function satisfying these boundary conditions.
\end{remark}

\section{A second-order semi-discrete convex-splitting scheme}
\label{sec:second-order}
For numerical computations, a straightforward first-order time stepping scheme like \eqref{eq:cs-1}-\eqref{eq:cs-3} is not the optimal choice due to its limited accuracy. A more effective alternative is to use second-order schemes, which may provide better accuracy without significantly increasing the computational complexity. Here, we introduce the second-order semi-discrete version of a convex-splitting scheme, which is of the following form: 
\begin{align}
\mathbf{v}^{k+1} - \mathbf{v}^k & = -\tau {\mu}^{k+\frac12} - \tau \eta^{k+\frac12} \mathbf{v}^{k+\frac12}, \label{eq:2nd-cs-1} \\
{\mu}^{k+\frac12} & = -\Delta\mathbf{u}^{k+\frac12}+\alpha\chi(\mathbf{u}^{k+1}, \mathbf{u}^k)- \beta\widetilde{\mathbf{u}}^{k+\frac12}, \label{eq:2nd-cs-2} \\
\mathbf{u}^{k+1} - \mathbf{u}^k & = \tau \mathbf{v}^{k+\frac12}, \label{eq:2nd-cs-3}
\end{align}
where
\begin{align}
\label{eq:u-k+1/2}
\begin{aligned}
&\mathbf{v}^{k+\frac{1}{2}}=\frac{\mathbf{v}^{k}+\mathbf{v}^{k+1}}{2},\quad\mathbf{u}^{k+\frac{1}{2}}=\frac{\mathbf{u}^{k}+\mathbf{u}^{k+1}}{2},\quad \widetilde{\mathbf{u}}^{k+\frac{1}{2}} = \frac{3}{2} \mathbf{u}^{k} - \frac{1}{2} \mathbf{u}^{k-1},\\
& \eta^{k+\frac12}=\eta(t_{k+\frac12})=\eta(\tau(k+1/2)),\quad\chi(\mathbf{u}^{k+1}, \mathbf{u}^k)=\frac12\left(\left|\mathbf{u}^{k+1}\right|^2+\left|\mathbf{u}^{k}\right|^2\right)\mathbf{u}^{k+\frac12}.
\end{aligned}
\end{align}
Here, \(\chi(\mathbf{u}^{k+1}, \mathbf{u}^k)\) is an approximation for \(\big|\mathbf{u}^{k+\frac{1}{2}}\big|^2\mathbf{u}^{k+\frac{1}{2}}\), as typically done for nonlinear problems, see, e.g. \cite{DuNic91,HuWisWan09}. Next, we present some fundamental results for this second-order scheme, with the focus on the convergence to a stationary point of the non-convex variational problem, though they are analogous to those for the first-order scheme.

\begin{theorem}\label{thm:2nd-unique-solvability}  Given any \( \tau > 0 \) and initial conditions \( \mathbf{u}^k, \mathbf{u}^{k-1} \in H_0^1(\Omega, \mathbb{R}^N)\), there exists a unique solution \( \mathbf{u}^{k+1} \in H^2(\Omega, \mathbb{R}^N)\cap H_0^1(\Omega, \mathbb{R}^N)\) for the scheme \eqref{eq:2nd-cs-1}-\eqref{eq:2nd-cs-3}. This unique solution ensures the following energy stability:
{\small\begin{align}\label{eq:modified_pseudo_energy-decay}
   \widetilde{\mathcal{E}}\left(\mathbf{u}^{k+1}, \mathbf{u}^k, \mathbf{v}^{k+1}\right) + \tau \eta^{k+\frac{1}{2}} \left\|\mathbf{v}^{k+\frac{1}{2}}\right\|_{L^2}^2 + \frac{\beta}{4} \left\|\mathbf{u}^{k+1} - 2\mathbf{u}^k + \mathbf{u}^{k-1}\right\|_{L^2}^2 = \widetilde{\mathcal{E}}\left(\mathbf{u}^k, \mathbf{u}^{k-1}, \mathbf{v}^k\right),
\end{align}}%
where
\[
\widetilde{\mathcal{E}}\left(\mathbf{u}^j, \mathbf{u}^{j-1}, \mathbf{v}^j\right) := \mathcal{E}\left(\mathbf{u}^j, \mathbf{v}^j\right) + \frac{\beta}{4} \left\|\mathbf{u}^j - \mathbf{u}^{j-1}\right\|_{L^2}^2.
\]
Moreover, assuming that $\eta^{k+\frac12} >0$ is bounded above, these solutions satisfy
    \begin{equation}\label{eq:2nd-tau-exploding-h2-estimate}
    \|\mathbf{u}^{k+1}\|_{H^2} \leq C(\tau) \quad\text{ for all }\,k\geq 1,\end{equation}
    with $C(\tau)$ independent of $k$.
\end{theorem}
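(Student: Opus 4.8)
The plan is to mirror the proof of Theorem~\ref{theorem3:unique-solvability}. Eliminating the velocity via $\mathbf{v}^{k+\frac12}=(\mathbf{u}^{k+1}-\mathbf{u}^k)/\tau$ and $\mathbf{v}^{k+1}=2\mathbf{v}^{k+\frac12}-\mathbf{v}^k$, the system \eqref{eq:2nd-cs-1}--\eqref{eq:2nd-cs-3} collapses, as in \eqref{eq:combined-cs-again}, to a single elliptic equation for $\mathbf{u}^{k+1}$ with $\mathbf{u}^k,\mathbf{u}^{k-1},\mathbf{v}^k$ as data,
\[
\Big(\tfrac{2}{\tau}+\eta^{k+\frac12}\Big)(\mathbf{u}^{k+1}-\mathbf{u}^k)-2\mathbf{v}^k=\tfrac{\tau}{2}\Delta(\mathbf{u}^{k}+\mathbf{u}^{k+1})-\tau\alpha\,\chi(\mathbf{u}^{k+1},\mathbf{u}^k)+\tau\beta\,\widetilde{\mathbf{u}}^{k+\frac12}.
\]
The three assertions then concern the solvability of this equation, the energy balance obtained by testing the scheme, and an $H^2$ estimate for its solution.

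\textbf{Solvability.} The key structural difference from the first-order scheme is that the discrete-gradient nonlinearity $\mathbf{w}\mapsto\chi(\mathbf{w},\mathbf{u}^k)$ is \emph{not} the gradient of a scalar potential, so the direct-minimization argument of Theorem~\ref{theorem3:unique-solvability} does not apply and the per-step problem is not variational. I would instead argue by monotone operator theory. The crucial point is that $\chi(\cdot,\mathbf{u}^k)$ is nonetheless \emph{monotone}: expanding the pointwise product $\big(\chi(\mathbf{a},\mathbf{c})-\chi(\mathbf{b},\mathbf{c})\big)\cdot(\mathbf{a}-\mathbf{b})$ one isolates the monotone cubic contribution, bounded below by $\tfrac12(|\mathbf{a}|^2-|\mathbf{b}|^2)^2$, the nonnegative term $|\mathbf{c}|^2|\mathbf{a}-\mathbf{b}|^2$, and a cross term, and the sum is nonnegative because the resulting quadratic form has nonpositive discriminant. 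Together with the strictly monotone, coercive linear part $(\tfrac2\tau+\eta^{k+\frac12})\mathbf{w}-\tfrac\tau2\Delta\mathbf{w}$, the operator $A\colon H_0^1\to H^{-1}$ defined by the left-hand side of the combined equation is bounded, hemicontinuous, strictly monotone and coercive (the quartic dominates), so the Browder--Minty theorem furnishes a unique weak solution $\mathbf{u}^{k+1}\in H_0^1$. Reading the equation afterwards as a linear elliptic problem with fixed $L^2$ right-hand side then upgrades it to $H^2$ exactly as in Theorem~\ref{theorem3:unique-solvability}.

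\textbf{Energy balance.} I would test \eqref{eq:2nd-cs-1} with $\mathbf{v}^{k+\frac12}$ and \eqref{eq:2nd-cs-3} with $\mu^{k+\frac12}$; using the latter to replace $\tau(\mu^{k+\frac12},\mathbf{v}^{k+\frac12})$ by $(\mu^{k+\frac12},\mathbf{u}^{k+1}-\mathbf{u}^k)$ leaves $\tfrac12(\|\mathbf{v}^{k+1}\|_{L^2}^2-\|\mathbf{v}^k\|_{L^2}^2)+(\mathbf{u}^{k+1}-\mathbf{u}^k,\mu^{k+\frac12})=-\tau\eta^{k+\frac12}\|\mathbf{v}^{k+\frac12}\|_{L^2}^2$. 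Two algebraic facts close the identity. First, $\chi$ is a discrete gradient for the quartic, $\alpha\,\chi(\mathbf{u}^{k+1},\mathbf{u}^k)\cdot(\mathbf{u}^{k+1}-\mathbf{u}^k)=\tfrac\alpha4(|\mathbf{u}^{k+1}|^4-|\mathbf{u}^k|^4)$, so the nonlinear term integrates exactly to $\int_\Omega(F_c(\mathbf{u}^{k+1})-F_c(\mathbf{u}^k))\,\mathrm{d}\mathbf{x}$, while the midpoint Laplacian yields $\tfrac12(\|\nabla\mathbf{u}^{k+1}\|_{L^2}^2-\|\nabla\mathbf{u}^k\|_{L^2}^2)$. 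Second, the explicit concave part satisfies $\widetilde{\mathbf{u}}^{k+\frac12}-\mathbf{u}^{k+\frac12}=-\tfrac12(\mathbf{u}^{k+1}-2\mathbf{u}^k+\mathbf{u}^{k-1})$, so writing $D^{j}:=\mathbf{u}^{j}-\mathbf{u}^{j-1}$ one finds $-\beta(\widetilde{\mathbf{u}}^{k+\frac12},\mathbf{u}^{k+1}-\mathbf{u}^k)=-\tfrac\beta2(\|\mathbf{u}^{k+1}\|_{L^2}^2-\|\mathbf{u}^k\|_{L^2}^2)+\tfrac\beta4(\|D^{k+1}\|_{L^2}^2-\|D^k\|_{L^2}^2)+\tfrac\beta4\|D^{k+1}-D^k\|_{L^2}^2$. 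Collecting terms gives precisely the \emph{equality} \eqref{eq:modified_pseudo_energy-decay}: the $\tfrac\beta4\|D^{j}\|_{L^2}^2$ pieces are absorbed into $\widetilde{\mathcal E}$, and $D^{k+1}-D^k=\mathbf{u}^{k+1}-2\mathbf{u}^k+\mathbf{u}^{k-1}$ supplies the extra dissipation. That this is an identity rather than an inequality reflects the conservative midpoint/discrete-gradient design.

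\textbf{$H^2$ estimate and main obstacle.} Since \eqref{eq:modified_pseudo_energy-decay} makes $\widetilde{\mathcal E}$ nonincreasing, combining it with Lemma~\ref{lemma3:energy-bound} gives uniform-in-$k$ control of $\|\mathbf{u}^k\|_{H^1}$ and $\|\mathbf{v}^k\|_{L^2}$, the analog of Lemma~\ref{lemma:priori-estimates_h1}. One then writes the combined equation in strong form $-\Delta\mathbf{u}^{k+1}+\lambda\mathbf{u}^{k+1}=g$ with $\lambda=\tfrac2\tau(\tfrac2\tau+\eta^{k+\frac12})>0$ and, using these $H^1$ bounds and the Sobolev embeddings for $d\le3$ (in particular $\|\chi(\mathbf{u}^{k+1},\mathbf{u}^k)\|_{L^2}\le C(\|\mathbf{u}^{k+1}\|_{H^1}^3+\|\mathbf{u}^k\|_{H^1}^3)\le C$), controls the nonlinear and lower-order parts of $g$, before invoking the elliptic $H^2$ theorem as in Theorem~\ref{theorem3:unique-solvability}. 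I expect the genuine difficulty to sit exactly here: unlike the fully implicit first-order scheme, the midpoint Laplacian forces a term $\tfrac\tau2\Delta\mathbf{u}^k$ into $g$, so a naive elliptic estimate yields only a recursion of the form $\|\Delta\mathbf{u}^{k+1}\|_{L^2}\le\|\Delta\mathbf{u}^k\|_{L^2}+C(\tau)$, which is not manifestly bounded in $k$. To reach the claimed $k$-independent constant one should keep $\Delta\mathbf{u}^{k+\frac12}$ intact and test the combined equation against $-\Delta(\mathbf{u}^{k+1}-\mathbf{u}^k)$, exploiting the Crank--Nicolson cancellation $(\Delta\mathbf{u}^{k+\frac12},\Delta(\mathbf{u}^{k+1}-\mathbf{u}^k))=\tfrac12(\|\Delta\mathbf{u}^{k+1}\|_{L^2}^2-\|\Delta\mathbf{u}^k\|_{L^2}^2)$ so that the top-order part telescopes; absorbing the remaining nonlinear and damping contributions through the uniform $H^1$ bounds and the second-difference term is the delicate step.
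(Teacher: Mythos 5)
Your first two parts are correct, and they are in fact more substantive than the paper's own treatment: the paper disposes of solvability and stability by citation (``analogous to'' \cite[Thm.~3.2]{BasHuLow13} and \cite[Thm.~3.4]{BasHuLow13}, a reference dealing with scalar equations), whereas you reconstruct the arguments. Your key observation --- that for $N\geq 2$ the map $\mathbf{w}\mapsto\chi(\mathbf{w},\mathbf{u}^k)$ is not a gradient field (its Jacobian $\tfrac14\bigl[2w_j(w_i+c_i)+(|\mathbf{w}|^2+|\mathbf{c}|^2)\delta_{ij}\bigr]$ is not symmetric), so the per-step problem is not variational and the minimization proof of Theorem~\ref{theorem3:unique-solvability} does not transfer --- is sharp, and your Browder--Minty substitute does work: the monotonicity follows exactly from the discriminant computation you indicate, using the identity $(|\mathbf{a}|^2\mathbf{a}-|\mathbf{b}|^2\mathbf{b})\cdot(\mathbf{a}-\mathbf{b})=\tfrac12(|\mathbf{a}|^2-|\mathbf{b}|^2)^2+\tfrac12(|\mathbf{a}|^2+|\mathbf{b}|^2)|\mathbf{a}-\mathbf{b}|^2$. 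Your derivation of \eqref{eq:modified_pseudo_energy-decay} is also exactly right: the discrete-gradient identity $\chi(\mathbf{a},\mathbf{b})\cdot(\mathbf{a}-\mathbf{b})=\tfrac14(|\mathbf{a}|^4-|\mathbf{b}|^4)$ holds in the vector case even though $\chi$ is not a gradient, and your three-term manipulation of the $\beta$-part produces both the $\widetilde{\mathcal{E}}$ correction and the dissipation $\tfrac\beta4\|\mathbf{u}^{k+1}-2\mathbf{u}^k+\mathbf{u}^{k-1}\|_{L^2}^2$ as an equality.

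The gap is in the third part, and it is twofold. First, your solvability paragraph ends by claiming the $H^2$ upgrade works ``exactly as in Theorem~\ref{theorem3:unique-solvability}''; this contradicts your own (correct) later remark. Because the Laplacian sits at the midpoint, per-step elliptic regularity with an $L^2$ right-hand side only yields $\mathbf{u}^{k+1}+\mathbf{u}^k\in H^2$; concluding $\mathbf{u}^{k+1}\in H^2$ requires already knowing $\mathbf{u}^k\in H^2$, and a Crank--Nicolson step does not smooth (for the linearized problem the high-frequency amplification factor tends to $-1$, not $0$, so $H_0^1$ data generally stay only in $H_0^1$). Second, the repair you propose --- testing with $-\Delta(\mathbf{u}^{k+1}-\mathbf{u}^k)$ so that the top-order terms telescope --- is precisely the computation of Lemma~\ref{lemma:priori-estimates_h2}, and it does not deliver the claimed $k$-independent constant: with $G^k:=\tfrac12\|\Delta\mathbf{u}^k\|_{L^2}^2+\tfrac12\|\nabla\mathbf{v}^k\|_{L^2}^2$, the nonlinear contribution is of size $\tau\|\nabla\chi\|_{L^2}\|\nabla\mathbf{v}^{k+\frac12}\|_{L^2}$ with $\|\nabla\chi\|_{L^2}\leq C(G^k+G^{k+1}+1)^{1/2}$, and absorbing it into the damping term costs a factor $1/\eta^{k+\frac12}$, which under \eqref{eq:2nd-Nesterov-or-faster} may grow like $k$; even for constant damping the resulting recursion $G^{k+1}-G^k\leq C\tau(G^{k+1}+G^k+1)$ only gives a Gronwall bound growing with $k\tau$, i.e.\ the $C(T)$ of Lemma~\ref{lemma:priori-estimates_h2}, not the $C(\tau)$ of \eqref{eq:2nd-tau-exploding-h2-estimate}. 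So the ``delicate step'' you flag is the entire content of the claim, and it remains open in your proposal. In fairness, the paper's own proof of \eqref{eq:2nd-tau-exploding-h2-estimate} is a one-line pointer to the techniques behind \eqref{eq:tau-exploding-h2-estimate}, which rely on the Laplacian being fully implicit and hence do not apply verbatim here either; you have correctly located a genuine weak point of the paper, but you have not repaired it.
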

\begin{proof}
The proof of unique solvability for \eqref{eq:2nd-cs-1}-\eqref{eq:2nd-cs-3} is analogous to \cite[Thm.~3.2]{BasHuLow13}. The energy stability \eqref{eq:modified_pseudo_energy-decay} can be shown in a manner similar to \cite[Thm.~3.4]{BasHuLow13}. The regularity result \eqref{eq:2nd-tau-exploding-h2-estimate} we refer to the techniques used in proving \eqref{eq:tau-exploding-h2-estimate} in Theorem \ref{theorem3:unique-solvability} for the first-order scheme.
\end{proof}

\begin{lemma}\label{lemma:2nd-priori-estimates_h1}
   Assume \( \mathbf{u}^0 \in H_0^1(\Omega, \mathbb{R}^N) \) and that $\eta^k >0$ is bounded from above. Let \( \mathbf{u}^k \in H_0^1(\Omega, \mathbb{R}^N) \) and \( \mathbf{v}^k \in L^2(\Omega, \mathbb{R}^N) \) be the solutions of \eqref{eq:2nd-cs-1}-\eqref{eq:2nd-cs-3} for all \( k \geq 1 \). Then the following estimate holds
    \begin{align}\label{eq:2nd-semi-discrete-estimate1}
        \max_{ k\geq 1}\left(\| \mathbf{u}^k \|_{H^1} + \| \mathbf{v}^k \|_{L^2} + \left\| \frac{\mathbf{v}^k - \mathbf{v}^{k-1}}{\tau} \right\|_{H^{-1}}\right) \leq C,
    \end{align}
    where \( C > 0 \) is a constant independent of \( \tau \).
\end{lemma}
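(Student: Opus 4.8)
The plan is to follow the proof of Lemma~\ref{lemma:priori-estimates_h1} almost verbatim, replacing the monotone pseudo-energy $\mathcal{E}$ by the modified pseudo-energy $\widetilde{\mathcal{E}}$ whose decay is supplied by \eqref{eq:modified_pseudo_energy-decay} in Theorem~\ref{thm:2nd-unique-solvability}. First I would note that, since $\eta^{k+\frac12}>0$ and the squared-difference term $\frac{\beta}{4}\|\mathbf{u}^{k+1}-2\mathbf{u}^k+\mathbf{u}^{k-1}\|_{L^2}^2$ are both nonnegative, the equality \eqref{eq:modified_pseudo_energy-decay} forces $k\mapsto\widetilde{\mathcal{E}}(\mathbf{u}^k,\mathbf{u}^{k-1},\mathbf{v}^k)$ to be non-increasing, hence bounded above by its initial value $\widetilde{\mathcal{E}}(\mathbf{u}^0,\mathbf{u}^{-1},\mathbf{v}^0)=E(\mathbf{u}^0)$ (using $\mathbf{u}^{-1}\equiv\mathbf{u}^0$ and $\mathbf{v}^0\equiv\mathbf{0}$). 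Because the added term $\frac{\beta}{4}\|\mathbf{u}^j-\mathbf{u}^{j-1}\|_{L^2}^2$ in the definition of $\widetilde{\mathcal{E}}$ is nonnegative, we have $\mathcal{E}(\mathbf{u}^k,\mathbf{v}^k)\le\widetilde{\mathcal{E}}(\mathbf{u}^k,\mathbf{u}^{k-1},\mathbf{v}^k)$, so that combining with Lemma~\ref{lemma3:energy-bound} gives
\begin{equation*}
C\|\mathbf{u}^k\|_{H^1}^2 + C^{\star} + \tfrac12\|\mathbf{v}^k\|_{L^2}^2 \le \mathcal{E}(\mathbf{u}^k,\mathbf{v}^k) \le E(\mathbf{u}^0),
\end{equation*}
from which the uniform bounds on $\|\mathbf{u}^k\|_{H^1}$ and $\|\mathbf{v}^k\|_{L^2}$ follow exactly as in the first-order case.

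For the third term I would shift the index in \eqref{eq:2nd-cs-1}--\eqref{eq:2nd-cs-2} and write $\frac{\mathbf{v}^k-\mathbf{v}^{k-1}}{\tau}=-\mu^{k-\frac12}-\eta^{k-\frac12}\mathbf{v}^{k-\frac12}$, then bound each contribution of $\mu^{k-\frac12}$ in $H^{-1}$ as in \eqref{eq:third-estimate-calculation}. The linear pieces are immediate: $\|\Delta\mathbf{u}^{k-\frac12}\|_{H^{-1}}\le\|\nabla\mathbf{u}^{k-\frac12}\|_{L^2}$ and $\|\widetilde{\mathbf{u}}^{k-\frac12}\|_{H^{-1}}$ are controlled by the $H^1$ bound of the first step, while $\eta^{k-\frac12}\|\mathbf{v}^{k-\frac12}\|_{H^{-1}}\le\eta^{k-\frac12}\|\mathbf{v}^{k-\frac12}\|_{L^2}$ is bounded using that $\eta$ is bounded above together with the $L^2$ bound on $\mathbf{v}$. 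The only genuinely new ingredient is the three-level cubic term $\chi(\mathbf{u}^k,\mathbf{u}^{k-1})=\frac12(|\mathbf{u}^k|^2+|\mathbf{u}^{k-1}|^2)\mathbf{u}^{k-\frac12}$; here I would use $\|\chi\|_{H^{-1}}\le C\|\chi\|_{L^2}$ and the Sobolev embedding $H^1\hookrightarrow L^6$ (valid for $d\le3$) together with H\"older's inequality in the form $\||\mathbf{u}|^2\mathbf{w}\|_{L^2}\le\|\mathbf{u}\|_{L^6}^2\|\mathbf{w}\|_{L^6}\le C\|\mathbf{u}\|_{H^1}^2\|\mathbf{w}\|_{H^1}$, so that $\|\chi\|_{L^2}$ is controlled by the already-established $H^1$ bounds on the iterates.

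The main difficulty --- such as it is --- is bookkeeping rather than analysis: one must check that the modified energy $\widetilde{\mathcal{E}}$ still dominates $\mathcal{E}$ so that Lemma~\ref{lemma3:energy-bound} remains applicable, and that the mixed three-level nonlinearity $\chi$ obeys the same $L^2$-type estimate as the plain cubic $|\mathbf{u}|^2\mathbf{u}$ of the first-order scheme. Both points are routine: the former follows from nonnegativity of the added squared-difference term, and the latter from the single H\"older--Sobolev estimate above. With these two observations in hand, the bound \eqref{eq:2nd-semi-discrete-estimate1} follows by the same argument used for Lemma~\ref{lemma:priori-estimates_h1}.
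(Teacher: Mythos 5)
Your proposal is correct and follows exactly the route the paper intends: the paper's own proof of this lemma is omitted (it simply states that it ``follows similarly'' to Lemma~\ref{lemma:priori-estimates_h1}), and your write-up supplies precisely the needed adaptations --- monotonicity of $\widetilde{\mathcal{E}}$ from \eqref{eq:modified_pseudo_energy-decay}, the domination $\mathcal{E}\leq\widetilde{\mathcal{E}}$ so that Lemma~\ref{lemma3:energy-bound} still applies, and the H\"older--Sobolev bound $\||\mathbf{u}|^2\mathbf{w}\|_{L^2}\leq C\|\mathbf{u}\|_{H^1}^2\|\mathbf{w}\|_{H^1}$ for the three-level term $\chi$, mirroring \eqref{eq:third-estimate-calculation}.
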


\begin{proof} The proof follows similarly to the one for Lemma 
\ref{lemma:priori-estimates_h1}. We omit the details here for brevity. 
\end{proof}

\begin{theorem} [Subsequential convergence to a stationary point] \label{theorem:2nd-subsequential-convergence}
For any fixed $\tau > 0$, assume the damping coefficients $\eta^{k+\frac12}$ satisfy
\begin{equation}\label{eq:2nd-Nesterov-or-faster}
\frac{\omega}{k} \leq \eta^{k+\frac12} \leq \omega_0  \quad \text{ for some } \omega,\; \omega_0 >0.
\end{equation} Let $\{\mathbf{u}^k\}_k$ be the sequence generated by the second-order semi-implicit scheme \eqref{eq:2nd-cs-1}-\eqref{eq:2nd-cs-3}. Then, there exists a subsequence of $\{\mathbf{u}^k\}_k$ that converges strongly in $H^1(\Omega, \mathbb{R}^N)$ to a stationary point of $E$.
\end{theorem}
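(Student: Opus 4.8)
The plan is to mirror the argument of Theorem~\ref{theorem:subsequential-convergence}, with the modified energy identity \eqref{eq:modified_pseudo_energy-decay} playing the role of the pseudo-energy decay. First I would observe that $\widetilde{\mathcal{E}}$ is bounded below: since $\widetilde{\mathcal{E}}(\mathbf{u}^j,\mathbf{u}^{j-1},\mathbf{v}^j) = E(\mathbf{u}^j) + \frac12\|\mathbf{v}^j\|_{L^2}^2 + \frac{\beta}{4}\|\mathbf{u}^j-\mathbf{u}^{j-1}\|_{L^2}^2$, Lemma~\ref{lemma3:energy-bound} gives $\widetilde{\mathcal{E}} \geq C^\star$. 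Rearranging \eqref{eq:modified_pseudo_energy-decay} into an equality for the nonnegative increments and summing telescopically over $k$ then yields the two summability statements
\[
\sum_{k\geq 1} \eta^{k+\frac12}\big\|\mathbf{v}^{k+\frac12}\big\|_{L^2}^2 < +\infty \quad\text{and}\quad \sum_{k\geq 1}\big\|\mathbf{u}^{k+1}-2\mathbf{u}^k+\mathbf{u}^{k-1}\big\|_{L^2}^2 < +\infty.
\]

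From the first sum, using the lower bound $\eta^{k+\frac12}\geq \omega/k$ from \eqref{eq:2nd-Nesterov-or-faster} exactly as in the first-order proof (comparing $\|\mathbf{v}^{k+\frac12}\|_{L^2}^2/(k+1)$ against $\eta^{k+\frac12}\|\mathbf{v}^{k+\frac12}\|_{L^2}^2/\omega$), I would deduce $\liminf_{k\to\infty}\|\mathbf{v}^{k+\frac12}\|_{L^2} = 0$ and extract a subsequence $k_\ell$ with $\mathbf{v}^{k_\ell+\frac12}\to 0$ strongly in $L^2$. The second sum forces $\mathbf{u}^{k+1}-2\mathbf{u}^k+\mathbf{u}^{k-1}\to 0$ for the \emph{whole} sequence; since this difference equals $\tau(\mathbf{v}^{k+\frac12}-\mathbf{v}^{k-\frac12})$, it follows that $\mathbf{v}^{k_\ell-\frac12}\to 0$ in $L^2$ along the same subsequence. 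By \eqref{eq:2nd-cs-3} this gives $\mathbf{u}^{k_\ell+1}-\mathbf{u}^{k_\ell}\to 0$ and $\mathbf{u}^{k_\ell}-\mathbf{u}^{k_\ell-1}\to 0$ in $L^2$, so the three consecutive iterates share a single $L^2$-limit along $k_\ell$.

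Next, the $H^2$ bound \eqref{eq:2nd-tau-exploding-h2-estimate} together with Lemma~\ref{lemma:2nd-priori-estimates_h1} shows that $\mathbf{u}^{k_\ell-1}, \mathbf{u}^{k_\ell}, \mathbf{u}^{k_\ell+1}$ are bounded in $H^2(\Omega,\mathbb{R}^N)$. Invoking Banach--Alaoglu and the compact embedding $H^2 \subset\subset H^1$, I would pass to a further common subsequence $k_{\ell_m}$ along which all three converge strongly in $H^1$; by the previous step their common limit is a single $\mathbf{u}^\infty \in H^2\cap H_0^1$. It then remains to pass to the limit in the weak form of \eqref{eq:2nd-cs-1}-\eqref{eq:2nd-cs-3}: the linear terms $a(\mathbf{u}^{k+\frac12},\cdot)$ and $\beta\widetilde{\mathbf{u}}^{k+\frac12}$ converge to $a(\mathbf{u}^\infty,\cdot)$ and $\beta\mathbf{u}^\infty$, the damping and acceleration contributions vanish because $\mathbf{v}^{k_{\ell_m}\pm\frac12}\to 0$, and I would obtain the stationarity identity $a(\mathbf{u}^\infty,\mathbf{z}) + (f_c(\mathbf{u}^\infty)-f_e(\mathbf{u}^\infty),\mathbf{z}) = 0$ for all $\mathbf{z}\in H_0^1$.

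The main obstacle is the convergence of the cubic nonlinear term $\chi(\mathbf{u}^{k+1},\mathbf{u}^k) = \frac12(|\mathbf{u}^{k+1}|^2 + |\mathbf{u}^k|^2)\mathbf{u}^{k+\frac12}$ to $|\mathbf{u}^\infty|^2\mathbf{u}^\infty$. This is where strong $H^1$ convergence, rather than merely weak, is essential: for $d\leq 3$ the embedding $H^1\hookrightarrow L^6$ lets me treat each factor as a strongly $L^6$-convergent sequence, so that products of three such factors converge in $L^2$ and hence test against $\mathbf{z}\in L^2$. Verifying that the three iterates genuinely share the same limit $\mathbf{u}^\infty$ (carried out in the second step above) is precisely what legitimizes identifying the limit of $\chi$ with $|\mathbf{u}^\infty|^2\mathbf{u}^\infty$; once this is in place, the remaining passages to the limit are routine and entirely parallel to Theorem~\ref{theorem:subsequential-convergence}.
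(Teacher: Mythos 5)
Your proposal matches the paper's argument up to and including the extraction of a subsequence along which the consecutive iterates $\mathbf{u}^{k_{\ell_m}-1},\mathbf{u}^{k_{\ell_m}},\mathbf{u}^{k_{\ell_m}+1}$ converge strongly in $H^1$ to one limit $\mathbf{u}^\infty$. In fact, your use of the summability of $\|\mathbf{u}^{k+1}-2\mathbf{u}^k+\mathbf{u}^{k-1}\|_{L^2}^2$ (available since \eqref{eq:modified_pseudo_energy-decay} is an identity), which gives $\mathbf{v}^{k+\frac12}-\mathbf{v}^{k-\frac12}\to 0$ along the \emph{whole} sequence, is a clean alternative to the paper's device of bundling three adjacent indices in $d_k$. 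Also, the cubic term you single out as the main obstacle is not where the difficulty lies: once strong $H^1$ (hence $L^6$) convergence of consecutive iterates to a common limit is in hand, that passage is routine.

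The genuine gap is the sentence ``the damping and acceleration contributions vanish because $\mathbf{v}^{k_{\ell_m}\pm\frac12}\to 0$.'' The damping term does vanish, but the acceleration term in the weak form of \eqref{eq:2nd-cs-1} is $(\mathbf{v}^{k+1}-\mathbf{v}^k,\mathbf{z})$, and this is \emph{not} controlled by the half-step velocities: these are averages, so $\mathbf{v}^{k\pm\frac12}\to 0$ only says $\mathbf{v}^{k+1}\approx-\mathbf{v}^k\approx\mathbf{v}^{k-1}$, which is perfectly compatible with a period-two oscillation $\mathbf{v}^k\approx(-1)^k\mathbf{w}$, $\mathbf{w}\neq 0$. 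Indeed, $\mathbf{v}^k=2\sum_{j=1}^{k}(-1)^{k-j}\mathbf{v}^{j-\frac12}$ is an alternating sum of \emph{all} previous half-velocities, so smallness of finitely many of them along a subsequence says nothing about $\mathbf{v}^k$ itself; your whole-sequence control $\mathbf{v}^{k+1}-\mathbf{v}^{k-1}\to 0$ is an even-lag statement and does not exclude the oscillation either. In that scenario, with $\mathbf{w}=-\frac{\tau}{2}\delta_{\mathbf{u}}E(\mathbf{u}^\infty)$ along a parity-constant subsequence, one has $(\mathbf{v}^{k+1}-\mathbf{v}^k,\mathbf{z})\to-\tau\left(\delta_{\mathbf{u}}E(\mathbf{u}^\infty),\mathbf{z}\right)$, which exactly cancels $\tau(\mu^{k+\frac12},\mathbf{z})\to\tau\left(\delta_{\mathbf{u}}E(\mathbf{u}^\infty),\mathbf{z}\right)$ in the weak form, and no stationarity can be concluded. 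Excluding this is precisely what the paper's steps \eqref{eq:limit-mu}--\eqref{eq:limit-v} accomplish: it compares two consecutive instances of \eqref{eq:2nd-cs-2} (which needs \emph{four} consecutive convergent iterates, hence three vanishing half-velocities --- the reason for the definition of $d_k$), rewrites the scheme as \eqref{eq:rewrite_2nd-cs-3} to obtain $\left(\frac{\tau}{2}\mu^{k_{\ell_m}-\frac12}+\mathbf{v}^{k_{\ell_m}},\mathbf{z}\right)\to 0$ and $\left(\frac{\tau}{2}\mu^{k_{\ell_m}+\frac12}+\mathbf{v}^{k_{\ell_m}+1},\mathbf{z}\right)\to 0$, and subtracts these to get $(\mathbf{v}^{k+1}-\mathbf{v}^k,\mathbf{z})\to 0$; it is the sign pattern in \eqref{eq:rewrite_2nd-cs-3} that kills the oscillation. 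Your argument can be repaired along these lines --- your second-difference control even supplies the additional $\mathbf{v}^{k_\ell-\frac32}\to 0$ needed for the fourth iterate --- but as written, the decisive step is missing and its stated justification is false.
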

\begin{proof}
The proof uses the same techniques as those used in Theorem \ref{theorem:subsequential-convergence} for the first-order scheme. For completeness, we include the detailed steps here.

First, for any \(n \in \mathbb{N}\), summing \eqref{eq:modified_pseudo_energy-decay} from \( k = 0 \) to \( n - 1 \), and applying Lemma \ref{lemma:2nd-priori-estimates_h1} we obtain:
\[\tau \sum_{k = 0}^{n-1} \eta^{k+\frac12} \|\mathbf{v}^{k+\frac12}\|^2_{L^2} \leq \widetilde{\mathcal{E}}\left(\mathbf{u} ^{0}, \mathbf{u} ^{-1}, \mathbf{v} ^{0}\right) - \widetilde{\mathcal{E}}\left(\mathbf{u} ^{n}, \mathbf{u} ^{n-1}, \mathbf{v} ^{n}\right)\leq \mathcal{E}(\mathbf{u}^{0} , \mathbf{v}^{0})+ C,\]
which implies that
\begin{equation}\label{eq:2nd-pseudo_energy_decay_series}\sum_{k = 0}^\infty \eta^{k+\frac12} \|\mathbf{v}^{k+\frac12}\|^2_{L^2} < +\infty.\end{equation}
Next, we define \(d_k\) to bundle three adjacent indices together as follows:
\[
d_k := \|\mathbf{v}^{k-\frac{3}{2}}\|^2_{L^2} + \|\mathbf{v}^{k-\frac{1}{2}}\|^2_{L^2} + \|\mathbf{v}^{k+\frac{1}{2}}\|^2_{L^2}, \quad \text{for all } k \geq 2.
\]
Using \eqref{eq:2nd-Nesterov-or-faster} along with \eqref{eq:2nd-pseudo_energy_decay_series}, and applying the reasoning from the proof of Theorem \ref{theorem:subsequential-convergence}, we conclude that \(\liminf_k d_k = 0\). Consequently, there exists a subsequence \( k_\ell \) such that $d_{k_\ell}\to 0$ ($\ell \to \infty$), which implies that
\begin{equation}\label{eq:2nd-conv-v2k-L2}
\mathbf{v}^{k_\ell-\frac{3}{2}} \xrightarrow[\ell \to \infty]{} 0, \; \mathbf{v}^{k_\ell-\frac{1}{2}} \xrightarrow[\ell \to \infty]{} 0, \; \text{and} \;\; \mathbf{v}^{k_\ell+\frac{1}{2}} \xrightarrow[\ell \to \infty]{} 0 \quad \text{strongly in } L^2(\Omega, \mathbb{R}^N).
\end{equation}
Given the \(H^2\) boundedness of \(\mathbf{u}^{k_\ell+1}\), \(\mathbf{v}^{k_\ell-\frac{3}{2}}\), \(\mathbf{v}^{k_\ell-\frac{1}{2}}\), and \(\mathbf{v}^{k_\ell+\frac{1}{2}}\) from \eqref{eq:2nd-tau-exploding-h2-estimate}, and using the Banach-Alaoglu theorem along with the compact embedding \(H^2 \subset H^1\), we can extract a subsequence \(\{k_{\ell_m}\}\) such that
\begin{equation}\label{eq:2nd-compactness-u-v}
\begin{aligned}
&\mathbf{u}^{k_{\ell_m}+1} \xrightarrow[m \to \infty]{} \mathbf{u}^\infty, \quad \mathbf{v}^{k_{\ell_m}-\frac{3}{2}}\xrightarrow[m \to \infty]{} 0, \\
& \mathbf{v}^{k_{\ell_m}-\frac{1}{2}}\xrightarrow[m \to \infty]{} 0, \quad \mathbf{v}^{k_{\ell_m}+\frac{1}{2}} \xrightarrow[m \to \infty]{} 0, 
\end{aligned}
\quad \text{strongly in } H^1(\Omega, \mathbb{R}^N).
\end{equation}
implying that
\begin{align}\label{eq:limit-u}
    \mathbf{u}^{k_{\ell_m}-2},\, \mathbf{u}^{k_{\ell_m}-1}, \,\mathbf{u}^{k_{\ell_m}}, \,\mathbf{u}^{k_{\ell_m}+1} \xrightarrow[m \to \infty]{} \mathbf{u}^\infty \quad \text{strongly in } H^1(\Omega, \mathbb{R}^N).
\end{align}
Next, using the convergence of all four sequences in two consecutive instances of \eqref{eq:2nd-cs-2}, we find
\begin{align}\label{eq:limit-mu}
\int_\Omega\left(\mu^{k_{\ell_m}-\frac12} - \mu^{k_{\ell_m}+\frac12}\right)\cdot\mathbf{z} \, \mathrm{d} \mathbf{x} \xrightarrow[m \to \infty]{} 0 \quad \text{for all }\mathbf{z} \in H^1_0(\Omega, \mathbb{R}^N).
\end{align}
Combining \eqref{eq:2nd-cs-1} and \eqref{eq:2nd-cs-3}, we have
\begin{align}
\left(\frac2\tau-\eta^{k+\frac12}\right)\left(\mathbf{u}^{k+1} - \mathbf{u}^k\right)=\tau \mu^{k+\frac{1}{2}}+2\mathbf{v}^{k+1}. \label{eq:rewrite_2nd-cs-3}
\end{align}
Applying \eqref{eq:limit-u} on the left-hand side of \eqref{eq:rewrite_2nd-cs-3}, we find that for all $\mathbf{z} \in H^1_0(\Omega, \mathbb{R}^N)$,
\begin{align}\label{eq:limit-mu-v}
\int_\Omega\left(\frac\tau2\mu^{k_{\ell_m}-\frac12} + \mathbf{v}^{k_{\ell_m}}\right)\cdot\mathbf{z} \, \mathrm{d} \mathbf{x},\; \int_\Omega\left(\frac\tau2\mu^{k_{\ell_m}+\frac12} + \mathbf{v}^{k_{\ell_m}+1}\right)\cdot\mathbf{z} \, \mathrm{d} \mathbf{x} \xrightarrow[m \to \infty]{} 0.
\end{align}
Now combining \eqref{eq:limit-mu} and \eqref{eq:limit-mu-v}, we obtain
\begin{align}\label{eq:limit-v}
\int_\Omega\left(\mathbf{v}^{k_{\ell_m}+1} - \mathbf{v}^{k_{\ell_m}}\right)\cdot\mathbf{z}\,\mathrm{d} \mathbf{x} \xrightarrow[m \to \infty]{} 0 \quad \text{for all }\mathbf{z} \in H^1_0(\Omega, \mathbb{R}^N).
\end{align}
Finally, passing to the limit with $k_{\ell_m}$ in the weak formulation of \eqref{eq:2nd-cs-1}-\eqref{eq:2nd-cs-3} results in
\[\begin{aligned}\int_\Omega \delta_{\mathbf{u}} E(\mathbf{u}^\infty) \cdot\mathbf{z} \, \mathrm{d} \mathbf{x} = 0 \quad \text{for all }\mathbf{z} \in H^1_0(\Omega, \mathbb{R}^N).\end{aligned}\]
\end{proof}

Similar results like timestep-independent estimates for \(\mathbf{u}^k\) and \(\mathbf{v}^k\) can be obtained as well for the second-order semi-discrete scheme \eqref{eq:2nd-cs-1}-\eqref{eq:2nd-cs-3}. However, we will not discuss it further here.

\section{Applications and numerical results}\label{section:numerics}

In this section, we test the effectiveness and efficiency of the proposed numerical algorithms within the framework of second-order flows, motivated by practical applications. For experiments involving second-order flows, we always adopt \(\eta = 3/t\) as the damping coefficient in our tests. As discussed in Section~\ref{section:convergence}, this choice aligns with the time-continuous version of Nesterov accelerated gradient method. All experiments were conducted on 2D square domains, using bilinear elements for spatial discretization. The computational tests were performed on a workstation equipped with a 2.0GHz CPU (X86-2A2) with 8 cores. All codes were implemented in MATLAB without parallel implementation.

Building on the semi-discrete schemes discussed in the previous section, we introduce two fully discrete convex-splitting schemes for second-order flow.  Here, \( S_h \) denotes the finite element space used for spatial discretization.

{\bf The First-order Scheme:} For all $0\leq k\leq \ell-1$, given $\mathbf{u}_h^k, \mathbf{v}_h^k \in S_h$, find $\mathbf{u}_h^{k+1}, \mathbf{v}_h^{k+1} \in S_h$ such that for all $\xi \in S_h$ and $\zeta \in S_h$,
\begin{align}
(\mathbf{v}_h^{k+1}-\mathbf{v}_h^{k}, \xi) + \tau\left(\eta^{k+1} (\mathbf{v}_h^{k+1}, \xi) +  a(\mathbf{u}_h^{k+1},\xi) + \left(\alpha \left|\mathbf{u}_h^{k+1}\right|^2\mathbf{u}_h^{k+1} - \beta \mathbf{u}_h^{k}, \xi\right)\right) &= 0, \label{eq:1st-CS-FEM-1} \\
 (\mathbf{u}_h^{k+1}-\mathbf{u}_h^{k}, \zeta) - \tau(\mathbf{v}_h^{k+1}, \zeta) &= 0.  \label{eq:1st-CS-FEM-2}
\end{align}
The initial conditions are set as $\mathbf{u}_h^{0} =  R_h \mathbf{u}_0$ and $\mathbf{v}_h^0 \equiv \mathbf{0}$, where $R_h$ is the Ritz projection operator $R_h: H^1(\Omega,\mathbb{R}^N) \rightarrow S_h$, defined by:
$$
a\left(R_h \psi- \psi, \xi\right)=0, \quad \forall \xi \in S_h.
$$

{\bf The Second-order Scheme:} For all $0\leq k\leq \ell-1$, given $\mathbf{v}^{k}_h,\mathbf{u}^{k-1}_h,\mathbf{u}^{k}_h\in S_h$, find $\mathbf{v}^{k+1}_h,\mathbf{u}^{k+1}_h\in S_h$ such that for all $\xi \in S_h$ and $\zeta \in S_h$,
\begin{align}
(\mathbf{v}_h^{k+1} - \mathbf{v}_h^{k}, \xi) + \tau\left(\eta^{k+\frac{1}{2}}(\mathbf{v}_h^{k+\frac{1}{2}}, \xi) + a(\mathbf{u}_h^{k+\frac12},\xi) + \left(\alpha\chi(\mathbf{u}_h^{k+1}, \mathbf{u}_h^k) - \beta\widetilde{\mathbf{u}}_h^{k+\frac12}, \xi\right)\right) &= 0, \label{eq:2nd-CS-FEM-1} \\
(\mathbf{u}_h^{k+1}-\mathbf{u}_h^{k}, \zeta) - \tau(\mathbf{v}_h^{k+\frac{1}{2}}, \zeta) &= 0, \label{eq:2nd-CS-FEM-2}
\end{align}
where notations defined in \eqref{eq:u-k+1/2} are employed. The initial conditions are set as $\mathbf{u}_h^{-1} =\mathbf{u}_h^{0} =  R_h \mathbf{u}_0$ and $\mathbf{v}_h^0 \equiv \mathbf{0}$.

In each iteration, we solve a coupled nonlinear system using Newton's method. The initial guess for the $k$-th iteration in Newton's method is set as \(2\mathbf{u}^k-\mathbf{u}^{k-1}\), and the iteration terminates when the residual dropped below \(10^{-10}\).

To ensure consistency in our comparisons across different numerical schemes, the same termination criteria is applied. The iterative process stops when both the following conditions are concurrently met:
\begin{align}\label{eq:stopcds}
    \left\|\Delta_h \mathbf{u} - f(\mathbf{u})\right\|_{\infty}< \varepsilon_r \quad \mbox{and}\quad   
   \frac{\left\|\mathbf{u}_h^n - \mathbf{u}_h^{n-1}\right\|_{\infty}}{\tau} < \varepsilon_v.
   \end{align}
Here both $0<\varepsilon_r\ll 1$ and $0<\varepsilon_v\ll 1$ are small numbers.

\subsection{Ginzburg-Landau free energy}
We start with the case of scalar-valued functions by examining a minimization problem governed by the scalar Ginzburg-Landau free energy functional. The functional is expressed as follows:
\begin{align}\label{eq:Ginzburg-en}
    E(u) = \int_{\Omega} \left( \frac{1}{2} |\nabla u|^2 + \frac{1}{4\epsilon^2}(u^2 - 1)^2 \right) \mathrm{d} \mathbf{x},
\end{align}
where \( \epsilon>0 \)  represents a potentially very small parameter. 

\begin{example}\label{eg:1}
We first compare various numerical schemes for the second-order flow to compute the stationary state. The chosen domain is $\Omega = [0,1] \times [0,1]$, with the initial condition specified as $u_0(x,y) = xy(1-x)(1-y)$. The final time is set as $T=1$, and the spatial size is $h=1/64$. We set the parameter $\epsilon = 0.1$. In addition to our primary focus on the two convex-splliting schemes, we compare with another four alternative schemes: two first-order schemes, namely Forward Euler \eqref{eq:forward-euler-1}-\eqref{eq:forward-euler-2} and Backward Euler \eqref{eq:backward-euler-1}-\eqref{eq:backward-euler-2}, as well as two second-order schemes, the Semi-Implicit \eqref{eq:semi-implicit} and Crank-Nicolson method \eqref{eq:crank-nicolson-1}-\eqref{eq:crank-nicolson-2} . Detailed formulations of these alternative schemes are provided in Appendix~\ref{appendix:numer_scheme}.

For all the schemes, we employ a uniform stopping criterion \eqref{eq:stopcds} with $\varepsilon_r=10^{-3}$ and $\varepsilon_v=10^{-3}$. The maximum allowed termination time is set to $T=500$, correspondingly, the maximum number of iterations is set to $500/\tau$. Table \ref{tab:1st-order-groundstate} and Table \ref{tab:2nd-order-groundstate} summarize the performances of the first-order and second-order schemes, respectively. These tables detail the number of iterations (iter), the average number of inner iterations (iters) (i.e., the average number of iterations required to solve the nonlinear equations at each time step), the computed energy, and the consumed CPU time (cpu).

Numerical observations indicate that the Forward Euler scheme maintains stability only with very small time steps (\(\tau = 0.001\)) and fails to converge when larger steps are applied. We have not displayed this limitation in Table \ref{tab:1st-order-groundstate}. In contrast, both the Backward Euler and the first-order convex-splitting schemes exhibit robust stability over a diverse range of time step sizes. Notably, the first-order convex splitting scheme shows its advantages over the backward Euler scheme. As evidenced in Table \ref{tab:1st-order-groundstate}, for time step sizes $\tau=1$ and $\tau=10$, although the backward Euler scheme reaches the stopping criteria, it converges to a solution with a higher energy. This discrepancy arises because the Backward Euler method does not guarantee the decay of pseudo-energy, as depicted in Figure~\ref{fig:1-1}. Conversely, the first-order convex-splitting method consistently ensures the decay of pseudo-energy (refer to Figure~\ref{fig:1-2}), aligning with the property presented in \eqref{eq:pseudo_energy-calculate}. 

Furthermore, Table~\ref{tab:2nd-order-groundstate} underscores the superior performance of the second-order convex splitting scheme when compared to both the Crank-Nicolson and Semi-Implicit schemes. This superiority stems from the unconditional unique solvability and unconditional energy stability inherent in the convex splitting schemes—a contrast to the Crank-Nicolson scheme, which does not assure unconditional unique solvability, and the Semi-Implicit scheme, which becomes unstable with larger time steps.

These findings showcase the distinctive benefits of convex-splitting schemes, particularly their simultaneous achievement of unconditional unique solvability and unconditional energy stability, highlighting their significance over other numerical methods.

\begin{table}[!t]
\centering
\caption{Results for First-order schemes: Example~\ref{eg:1}.}
\begin{tabular}{|c|c|c|c|c|c|}
\hline
Scheme & \(\tau\) & Iter(iters) & Energy & Maxres & CPU(s) \\
\hline
Backward Euler & 10 & 3 (3.33) & 25.0000 & \(1.41 \times 10^{-7}\) & 0.40 \\
Convex-splitting 1st & 10 & 12 (4.22) & 15.6982 & \(2.39 \times 10^{-5}\) & 1.02 \\
\hline
Backward Euler & 1 & 18 (3.17) & 24.9979 & \(3.13 \times 10^{-7}\) & 1.49 \\
Convex-splitting 1st & 1 & 13 (3.62) & 15.6982 & \(1.60 \times 10^{-5}\) & 1.82 \\
\hline
Backward Euler & 0.1 & 28 (2.82) & 15.6982 & \(2.19 \times 10^{-6}\) & 2.24 \\
Convex-splitting 1st & 0.1 & 32 (2.81) & 15.6982 & \(1.48 \times 10^{-6}\) & 2.44 \\
\hline
Backward Euler & 0.01 & 251 (1.99) & 15.6983 & \(1.94 \times 10^{-5}\) & 17.52 \\
Convex-splitting 1st & 0.01 & 257 (1.99) & 15.6982 & \(1.12 \times 10^{-5}\) & 16.66 \\
\hline
Forward Euler & 0.001 & 2763 & 15.6982 & \(3.23 \times 10^{-6}\) & 85.32 \\
Backward Euler & 0.001 & 2766 (1.61) & 15.6982 & \(3.17 \times 10^{-6}\) & 155.40 \\
Convex-splitting 1st & 0.001 & 2768 (1.61) & 15.6982 & \(2.83 \times 10^{-6}\) & 153.69 \\
\hline
\end{tabular}
\label{tab:1st-order-groundstate}
\end{table}

\begin{table}[!t]
\centering
\caption{Results for Second-order schemes: Example~\ref{eg:1}.}
\begin{tabular}{|c|c|c|c|c|c|}
\hline
Scheme & \(\tau\) & Iter(iters) & Energy & Maxres & CPU(s) \\
\hline
Convex-splitting 2nd & 10 & 18 (4.67) & 15.6982 & \(3.63 \times 10^{-5}\) & 1.56 \\
Crank-Nicolson & 10 & 50 (7.56) & 15.7947 & \(1.95 \times 10^{-3}\) & 7.22 \\
\hline
Convex-splitting 2nd & 1 & 36 (4.00) & 15.6982 & \(2.63 \times 10^{-5}\) & 3.30 \\
Crank-Nicolson & 1 & 500 (4.96) & 15.7031 & \(8.16 \times 10^{-4}\) & 54.76 \\
\hline
Semi-Implicit & 0.1 & 5000 & 15.6982 & \(3.53 \times 10^{-4}\) & 166.43 \\
Convex-splitting 2nd & 0.1 & 37 (3.16) & 15.6982 & \(4.34 \times 10^{-6}\) & 3.86 \\
Crank-Nicolson & 0.1 & 32 (3.31) & 15.6982 & \(7.27 \times 10^{-6}\) & 2.90 \\
\hline
Semi-Implicit & 0.01 & 277 & 15.6982 & \(3.78 \times 10^{-6}\) & 8.95 \\
Convex-splitting 2nd & 0.01 & 277 (2.13) & 15.6982 & \(3.83 \times 10^{-6}\) & 16.69 \\
Crank-Nicolson & 0.01 & 277 (2.16) & 15.6982 & \(3.69 \times 10^{-6}\) & 16.99 \\
\hline
Semi-Implicit & 0.001 & 2767 & 15.6982 & \(3.49 \times 10^{-6}\) & 91.46 \\
Convex-splitting 2nd & 0.001 & 2767 (2.00) & 15.6982 & \(3.49 \times 10^{-6}\) & 166.78 \\
Crank-Nicolson & 0.001 & 2767 (2.00) & 15.6982 & \(3.48 \times 10^{-6}\) & 144.14 \\
\hline
\end{tabular}
\label{tab:2nd-order-groundstate}
\end{table}
\end{example}

\begin{figure}[!t]
\centering
\begin{subfigure}[b]{0.44\textwidth}  
\includegraphics[width=0.9\textwidth, height=0.18\textheight]{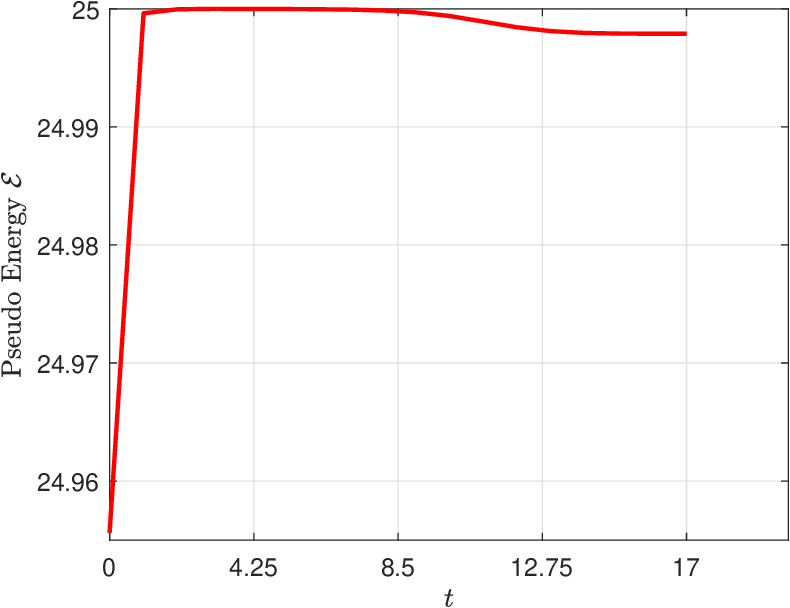}
\caption{Backward-Euler scheme}
    \label{fig:1-1}
\end{subfigure}%
\hfill
\begin{subfigure}[b]{0.44\textwidth}
\includegraphics[width=0.9\textwidth, height=0.18\textheight]{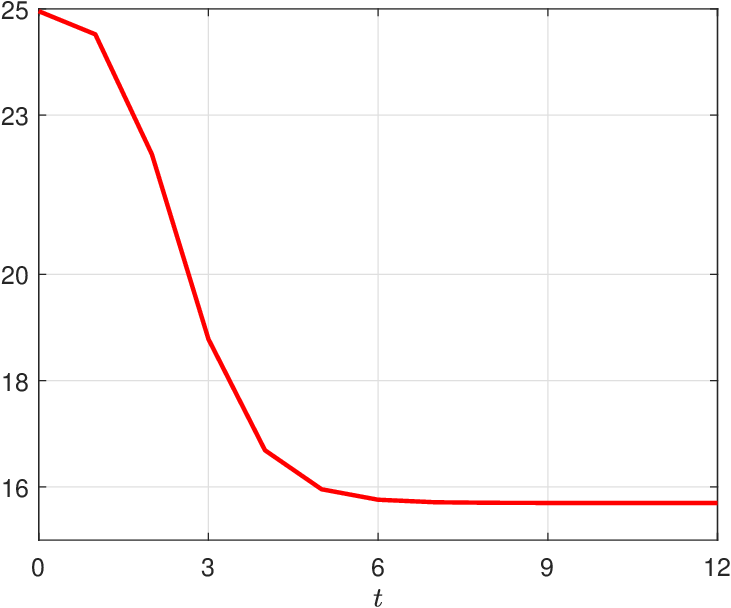}
\caption{First-order Convex-splitting scheme}
    \label{fig:1-2}
\end{subfigure}%
\caption{Pseudo-energy decay comparison: Backward Euler scheme \eqref{eq:backward-euler-1}-\eqref{eq:backward-euler-2} vs. First-order Convex-splitting scheme \eqref{eq:1st-CS-FEM-1}-\eqref{eq:1st-CS-FEM-2} for $\tau=1$ in Example~\ref{eg:1}.}
\label{fig:Pseudo-En-decay}
\end{figure}

In the following two examples of this subsection, we compare the numerical efficiency of the proposed second-order flow methods with those of the gradient flow methods. The gradient flow and its corresponding convex-splitting schemes are provided in Appendix~\ref{appendix:gradient-flow}.
 
\begin{example}\label{eg:3}
Considering a computational domain of $[0,2\pi] \times [0,2\pi]$ with the initial condition \( u_0(x,y) = \tanh(x-2y)\sin(x)\cos(x) \). We set the parameter \(\epsilon=0.05\) and use the stopping criteria \eqref{eq:stopcds} with both \(\varepsilon_r\) and \(\varepsilon_v\) fixed at \(10^{-3}\). The experiment employs four computational strategies: the first-order and second-order convex-splitting schemes for gradient flow (GF-CS-1st and GF-CS-2nd) and for second-order flow (SF-CS-1st and SF-CS-2nd). The time step size is \(\tau = 0.1\) for second-order schemes and \(\tau = 0.01\) for first-order schemes, with a spatial grid size of \(h = 1/128\).

Figure~\ref{fig:isotropic_comparison} shows the energy evolution over time for these methods. The results indicate that the second-order flow methods (SF-CS-1st and SF-CS-2nd) demonstrate a faster rate of energy decay compared to the gradient flow methods (GF-CS-1st and GF-CS-2nd), although the gradient flow methods are already quite efficient in this scenario.  
\end{example}

\begin{figure}[!t]
    \centering
    \begin{subfigure}[b]{0.4\textwidth}
        \includegraphics[width=\textwidth]{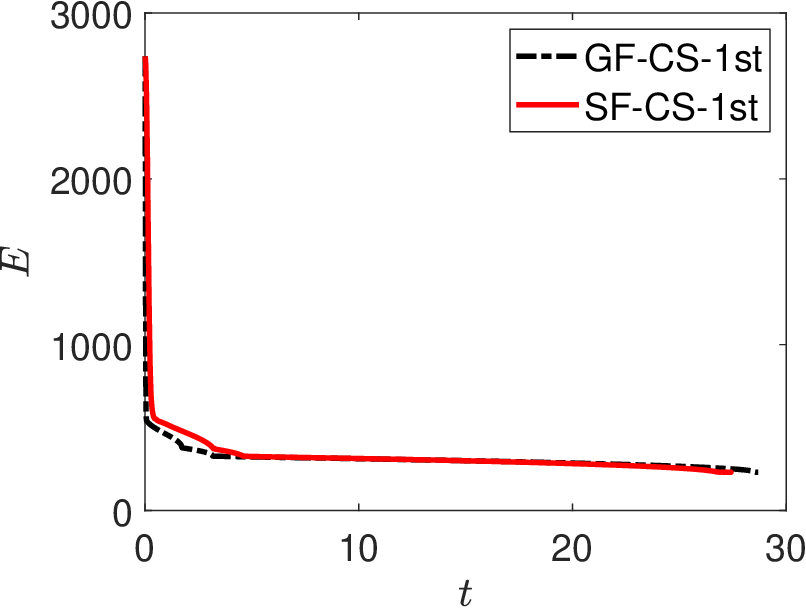}
    \end{subfigure}
    \hspace{3em}
    \begin{subfigure}[b]{0.4\textwidth}
        \includegraphics[width=\textwidth]{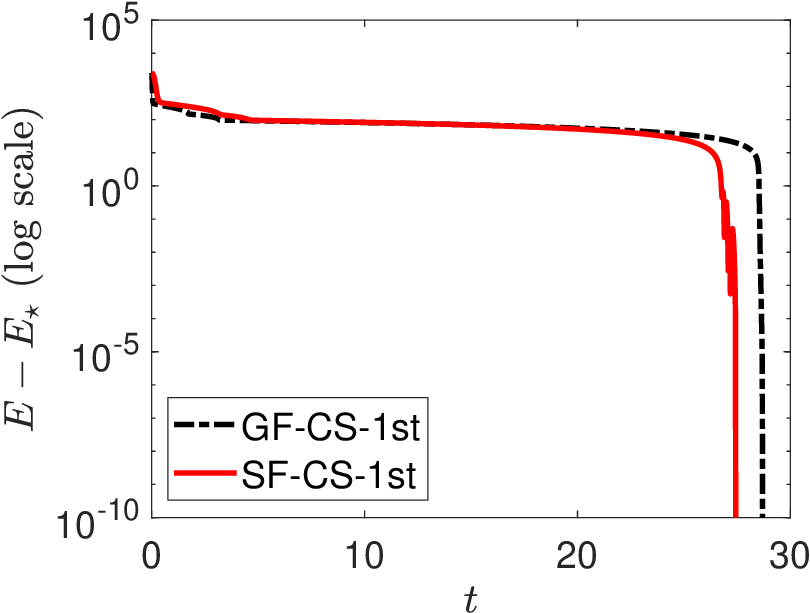}
    \end{subfigure}
    
    \vspace{1ex} 
    
    \begin{subfigure}[b]{0.4\textwidth}
        \includegraphics[width=\textwidth]{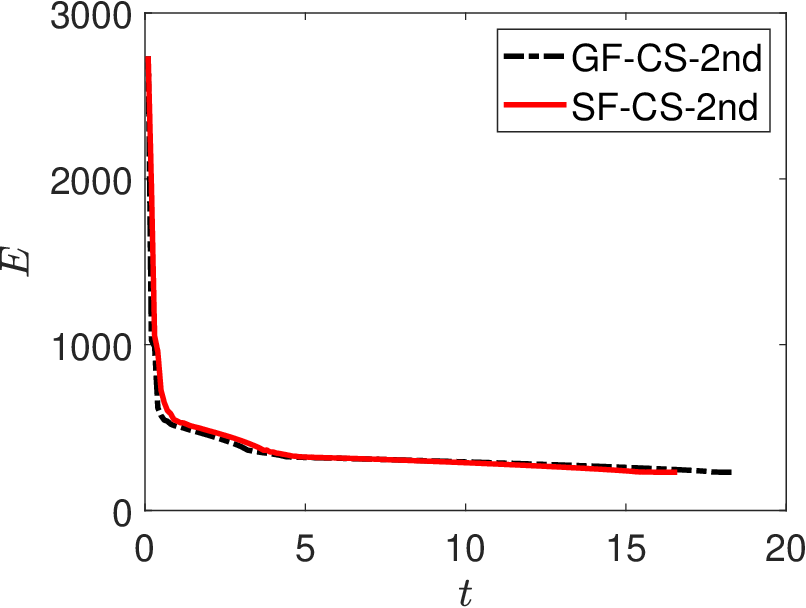}
    \end{subfigure}
    \hspace{3em}
    \begin{subfigure}[b]{0.4\textwidth}
        \includegraphics[width=\textwidth]{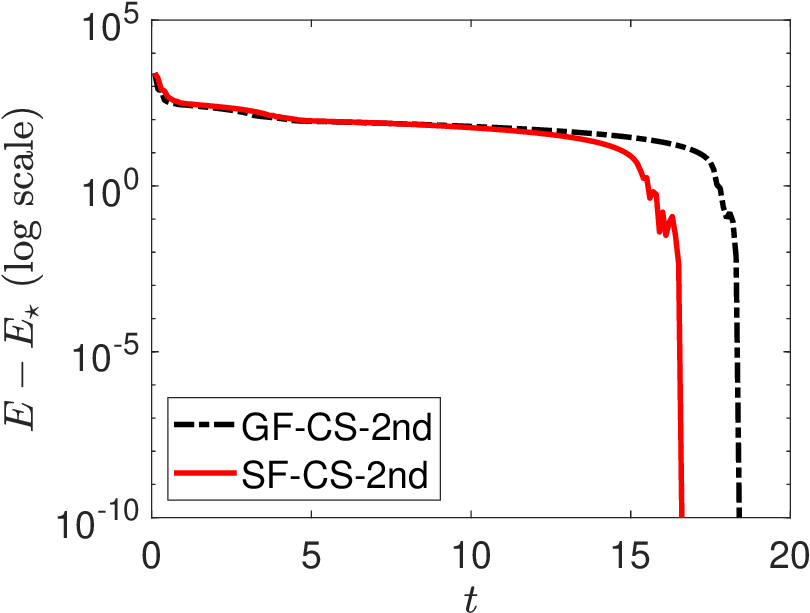}
    \end{subfigure}
    
    \caption{Comparison of energy evolution (left column) and convergence to the minimum energy (right column) over iterations for Example~\ref{eg:3} (an isotropic case) using Gradient Flow (GF-CS-1st $\&$ GF-CS-2nd) and Second-order Flow (SF-CS-1st $\&$ SF-CS-2nd) methodologies. Within the figures, $E$ denotes the calculated energy, while $E_{\star}$ signifies the minimum energy achieved throughout the iterations.}
    \label{fig:isotropic_comparison}
    \vspace{-2ex}
\end{figure}

\begin{example}\label{eg:4}
In this example, we investigate an anisotropic variant of the original Ginzburg-Landau energy \eqref{eq:Ginzburg-en}. The anisotropic energy functional, \( E_{aniso}(u) \), is defined by:
\begin{align}
E_{aniso}(u) = \int_{\Omega} \left( \frac{1}{2} \left( k_x(\partial_x u)^2 + k_y(\partial_y u)^2 \right) + \frac{1}{4\epsilon^2}(u^2-1)^2 \right) dx.
\end{align}
Here, we set \( k_x = \frac{1}{100} \) and \( k_y = 1 \) to introduce clear anisotropy, while all other parameters remain consistent with those used in Example~\ref{eg:3}. The experiment compares the rate of energy decay between the gradient flow and second-order flow. Given the lower computational efficiency and higher computational cost of the first-order schemes observed in example~\ref{eg:3}, in this more complex scenario, we only compared the results of the second-order schemes and did not evaluate the first-order schemes.

As depicted in Figure~\ref{fig:anisotropic_comparison}, the trajectories clearly indicate the superior performance of a second-order flow method over its gradient flow counterpart. The energy reduction pace is notably faster than the gradient flow method, underlining the enhanced efficiency of the second-order flow, especially in an anisotropic context.

\begin{figure}[!t]
\centering
\begin{subfigure}[b]{0.4\textwidth}
    \includegraphics[width=\textwidth]{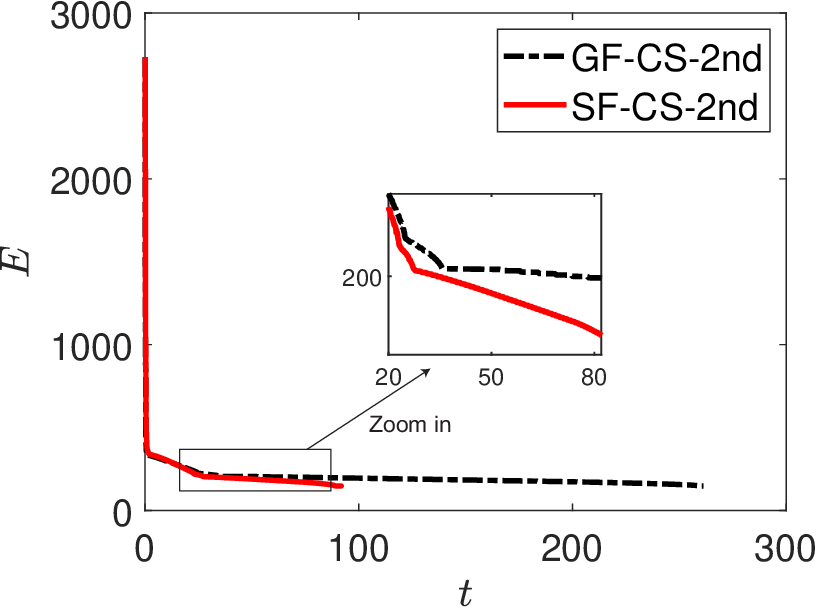}
    \label{fig1:13-1}
\end{subfigure}%
\hspace{3em}
\begin{subfigure}[b]{0.4\textwidth}
    \includegraphics[width=\textwidth]{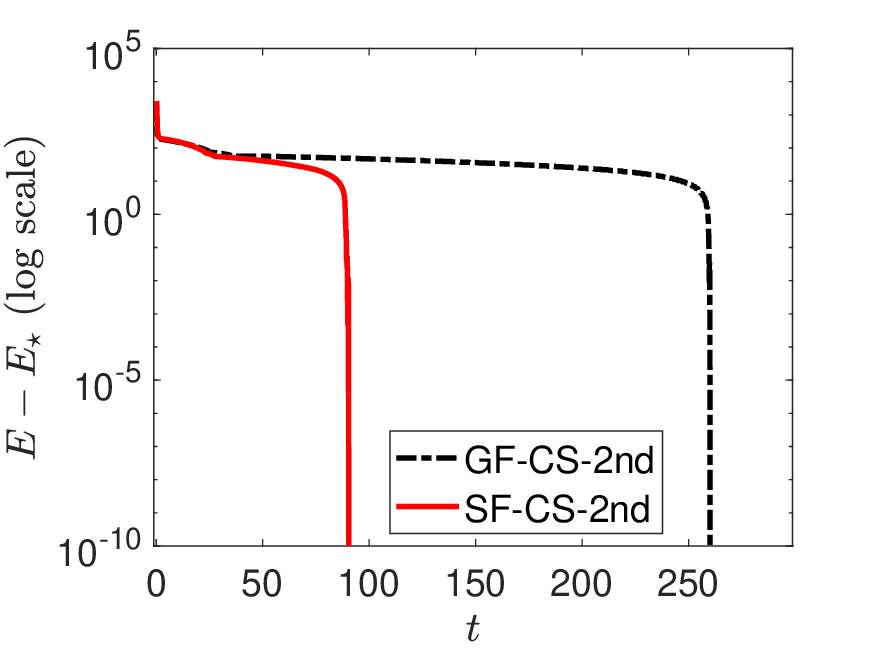}
    \label{fig1:13-2}
\end{subfigure}%
\vspace{-2ex}
\caption{Comparison of energy evolution (left column) and convergence to the minimum energy (right column) over iterations for Example~\ref{eg:4} (an anisotropic case) using Gradient Flow (GF-CS-2nd) and Second-order Flow (SF-CS-2nd) methodologies.}
\label{fig:anisotropic_comparison}
\vspace{-3ex}
\end{figure}
\end{example}

\begin{remark}
This anisotropic term reduces the uniform convexity of the quadratic term in the energy functional, which makes gradient flow methods not so efficient as in the isotropic case. This example highlights the advantages of the second-order flows, which accelerate the minimization algorithm when the convexity of the energy functional is challenged. Notice that the theoretical results established in this paper are also valid for the anisotropic form energy functional.
\end{remark}

\subsection{Landau–de Gennes (LdG) model}
The LdG model is widely recognized for its role in characterizing liquid crystal ordering, treating the free energy as a functional of the $\mathbf{Q}$-tensor field, as detailed in various studies \cite{CaiSheXu17, huang2019regularity, wang2019order, yin2020construction}. For two-dimensional nematic liquid crystals, the LdG free energy functional is given by:
\[
J(\mathbf{Q}) = \int \left\{ \frac{1}{2}|\nabla \mathbf{Q}|^2 + \vartheta F(\mathbf{Q}) \right\} \, \mathrm{d} \mathbf{r},
\]
where $\vartheta$ is a positive parameter, and the one-constant approximation \cite{Bal17} is assumed. The $\mathbf{Q}$-tensor at a point $\mathbf{r} = (x, y)$ represents the orientational order of the liquid crystals and is characterized by a $2 \times 2$ traceless and symmetric matrix:
$$
\mathbf{Q} = \begin{pmatrix}
p_1 & p_2 \\
p_2 & -p_1
\end{pmatrix}.
$$
The square of the Frobenius norm of $\nabla \mathbf{Q}$ is computed as $|\nabla \mathbf{Q}|^2 = 2\left(|\nabla p_1|^2 + |\nabla p_2|^2\right)$. The bulk free energy, or Landau function $F(\mathbf{Q})$, is most often taken to be of the general form \cite{MotNew14}:
$$
F(\mathbf{Q}) = \frac{a}{4} \operatorname{tr}\left(\mathbf{Q}^2\right) - \frac{b}{6} \operatorname{tr}\left(\mathbf{Q}^3\right) + \frac{c}{8}\left(\operatorname{tr}\left(\mathbf{Q}^2\right)\right)^2,
$$
in which $a$ signifies the reduced temperature difference, $b \geq 0$ and $c>0$. For our experiments, we consider the particular case
$$
F(\mathbf{Q}) = \frac{a}{4} \operatorname{tr}\left(\mathbf{Q}^2\right) + \frac{1}{8}\left(\operatorname{tr}\left(\mathbf{Q}^2\right)\right)^2 = \frac{a}{2}(|p_1|^2 + |p_2|^2) + \frac{1}{2}(|p_1|^2 + |p_2|^2)^2.
$$
We then introduce $\mathbf{u} = ( p_1, p_2)^T$, the energy functional becomes
\begin{align}
E(\mathbf{u}) &= \int_{\Omega}\left\{|\nabla \mathbf{u}|^2 + \vartheta \left( \frac{a}{2}|\mathbf{u}|^2 + \frac{1}{2}|\mathbf{u}|^4 \right)\right\} \mathrm{d} \mathbf{r}.
\end{align}
It follows that $J(\mathbf{Q})$ is equivalent to $E(\mathbf{u})$. The objective is to find stable liquid crystal configurations by minimizing the energy functional $E(\mathbf{u})$, subject to certain boundary conditions. This problem aligns with the energy minimization problem \eqref{eq:energy_minimization}. After a minimizer $\mathbf{u}$ of $E$ is found, the generally nonuniform, undiagonalized $\mathbf{Q}(\mathbf{r})$ can be analyzed by computing its eigenvalues,
$$
S(\mathbf{r}) / 2= \pm \sqrt{p_1(\mathbf{r})^2+p_2(\mathbf{r})^2} .
$$
The nematic field director is given by the eigenvector of $\mathbf{Q}(\mathbf{r})$ (for example, corresponding to the positive eigenvalue),
$$
\mathbf{n}(\mathbf{r})=\left(\sqrt{1 / 2+p_1(\mathbf{r}) /|S(\mathbf{r})|}, \sigma(\mathbf{r}) \sqrt{1 / 2-p_1(\mathbf{r}) /|S(\mathbf{r})|}\right),
$$
with $\sigma(\mathbf{r})=\mathbf{1}_{\left\{p_2 \geq 0\right\}}(\mathbf{r})-\mathbf{1}_{\left\{p_2<0\right\}}(\mathbf{r})$ and $\mathbf{1}$ is the indicator function.

\begin{example}
To ensure the system in the nematic phase, we select a low temperature setting with $a = -1.672$. The domain $\Omega = [-1,1] \times [-1,1]$ is selected for square confinement. For the Dirichlet boundary conditions, we impose
$$
\mathbf{u}(x, y = \pm 1) = \frac{S_0}{2}\begin{pmatrix}
1 & 0 \\
0 & -1
\end{pmatrix}, \quad \mathbf{u}(x = \pm 1, y) = \frac{S_0}{2}\begin{pmatrix}
-1 & 0 \\
0 & 1
\end{pmatrix},
$$
where $S_0=\sqrt{2|a|}$. The chosen boundary conditions for the Q-tensor, aligning the nematic director parallel to the square boundary lines, represents a common approach in the modeling of liquid crystals confined within geometries \cite{deGennes1993}. To compute the stationary points of the LdG free energy, the second-order flow method is applied, utilizing the convex-splitting scheme \eqref{eq:2nd-CS-FEM-1}-\eqref{eq:2nd-CS-FEM-2}. We select a time step size of $\tau=0.1$, initiate our simulations with random initial conditions, and specify the tolerances for the stopping criteria in \eqref{eq:stopcds} as $\varepsilon_r=10^{-10}$ and $\varepsilon_v=10^{-10}$. The results depict stable liquid crystal configurations for varying values of $\vartheta$. Specifically, for $\vartheta$ values of 5, 15, and 50, we use a grid size of $h=1/64$, while for $\vartheta$ values of 100, 200, and 500, we opt for a finer grid size of $h=1/128$. The computed configurations are presented in Figure~\ref{fig:Liquid-crystal}.
\end{example}

\begin{figure}[!ht]
\vspace{-1ex}
    \centering
    \begin{subfigure}{0.31\textwidth}
        \includegraphics[width=\linewidth]{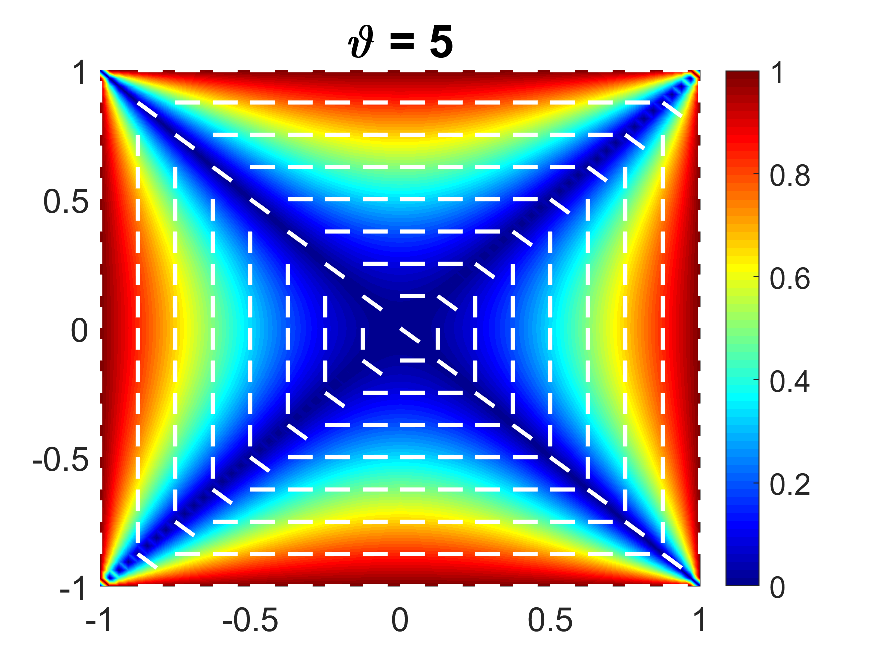}
    \end{subfigure}
    \begin{subfigure}{0.31\textwidth}
        \includegraphics[width=\linewidth]{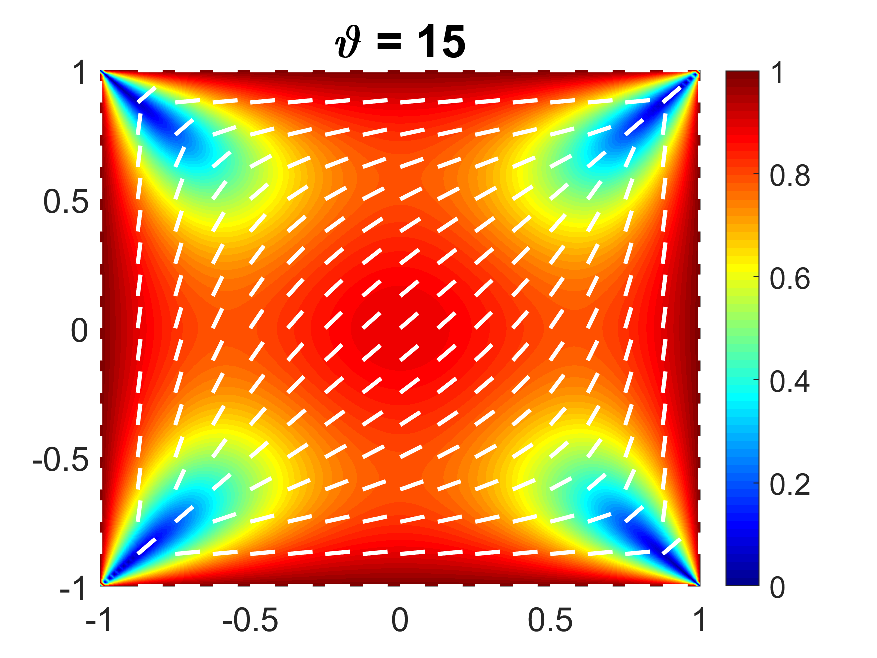}
    \end{subfigure}
    \begin{subfigure}{0.31\textwidth}
        \includegraphics[width=\linewidth]{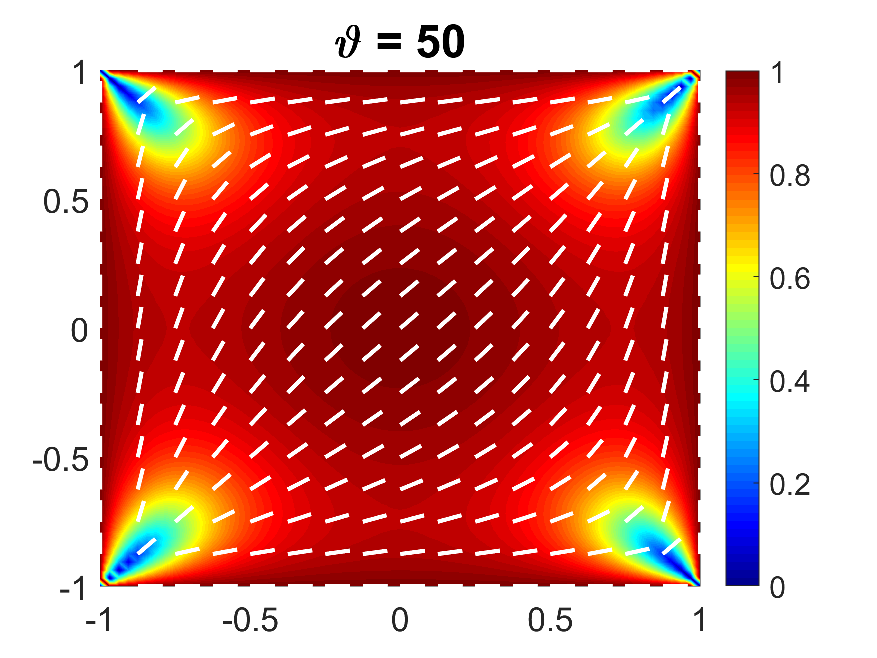}
    \end{subfigure}
    
    
    \begin{subfigure}{0.31\textwidth}
        \includegraphics[width=\linewidth]{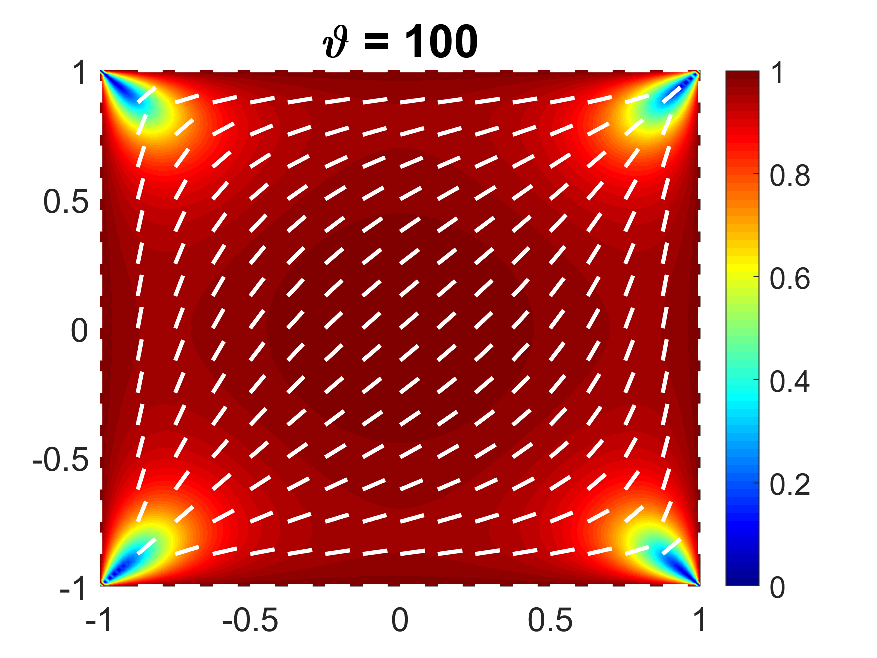}
    \end{subfigure}
    \begin{subfigure}{0.31\textwidth}
        \includegraphics[width=\linewidth]{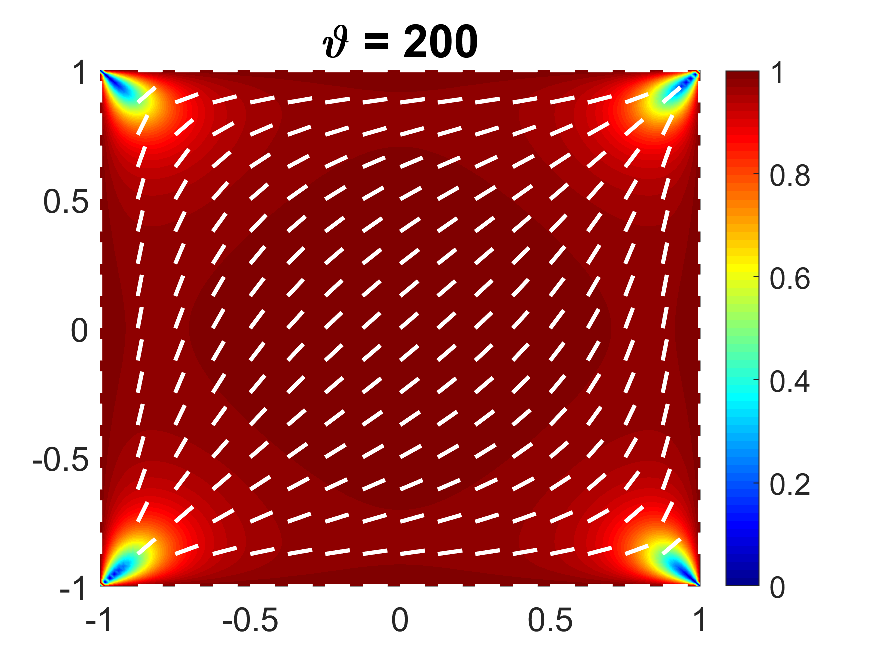}
    \end{subfigure}
    \begin{subfigure}{0.31\textwidth}
        \includegraphics[width=\linewidth]{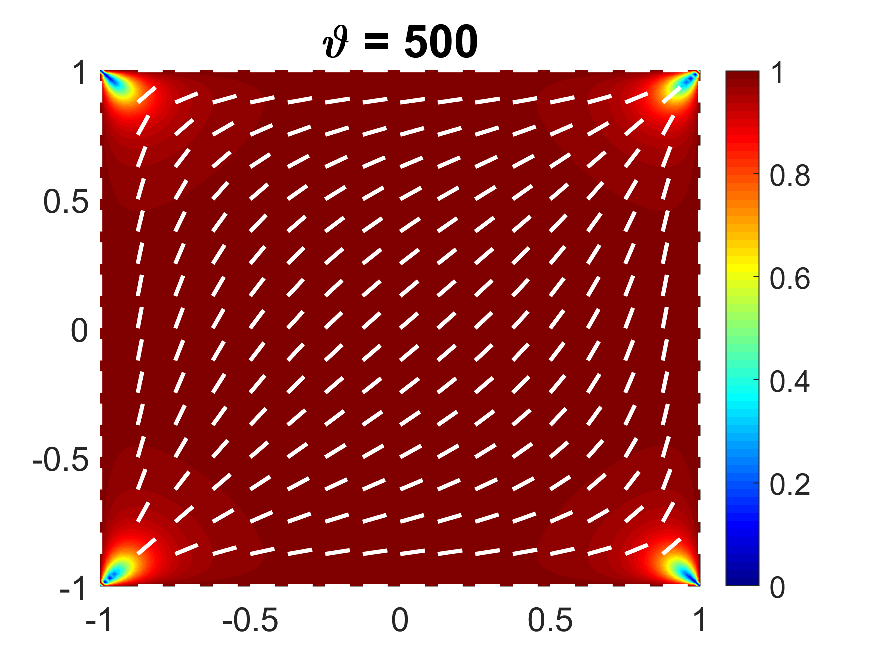}
    \end{subfigure}
    \caption{Stable liquid crystal configurations at varying $\vartheta$ values (5, 15, 50, 100, 200, 500), computed via the second-order flow method. The color gradient indicates the relative intensity of directional ordering, quantified as $|S(\mathbf{r})| / S_0$. The white bars depict the directional vectors of the nematic field, $\mathbf{n}(\mathbf{r})$.}
    \label{fig:Liquid-crystal}
    \vspace{-3ex}
\end{figure}

\section{Conclusion}\label{section:5}
This paper studied  numerical solvers based on second-order flows for a class of non-convex variational problems. Starting with semi-discrete formulations of the second-order flows, we have proven subsequential convergence of the temporal trajectory to a stationary point of the non-convex energy, and established the well-posedness of the continuous flows by making the timestep vanish. For a class of non-convex functionals, both the schemes exhibit unconditional pseudo-energy stability and unconditional unique solvability simultaneously, highlighting their advantages over other numerical schemes. Moreover, we have witnessed the efficiency of second-order flows, as the minimization strategy for these non-convex variational problems, especially notable when dealing with functional of increased complexity and non-convexity. Their versatility has also been affirmed through a couple of applications in scientific computing. This investigation paves the way to deepen our exploration of second-order flows. Interesting aspects from the optimization point of view, like the convergence of complete trajectories and associated convergence rates to stationary points, are on the agenda of our next steps. Moreover, alternative numerical schemes, such as stabilized IMEX BDF methods or exponential time differential methods, will be interesting potential topics, in particular their numerical analysis and energy stability analysis.

\section*{Acknowledgments}
\addcontentsline{toc}{section}{Acknowledgments}
The authors thank the reviewers for their comments which helped to improve the presentation of the paper. Part of the work was done when W.~Liu was employed by the National University of Singapore.

\appendix

\section{Other numerical methods involved in the paper}
\subsection{Other schemes for discretizing second-order flows}\label{appendix:numer_scheme}

\paragraph{Forward Euler Scheme}
\begin{align}
& (\mathbf{v}_h^{k+1}-\mathbf{v}_h^k, \xi) + \tau \eta^{k} (\mathbf{v}_h^{k}, \xi) + \tau \left( a(\mathbf{u}_h^{k},\xi) + (f(\mathbf{u}_h^{k}) , \xi) \right) = 0, \quad \forall \xi \in S_h, \label{eq:forward-euler-1} \\
& (\mathbf{u}_h^{k+1}-\mathbf{u}_h^k, \zeta) - \tau (\mathbf{v}_h^{k+1}, \zeta) = 0, \quad \forall \zeta \in S_h, \label{eq:forward-euler-2}
\end{align}
with initial conditions \(\mathbf{u}_h^{0} = R_h \mathbf{u}_0\) and \(\mathbf{v}_h^0 \equiv \mathbf{0}\).

\paragraph{Backward Euler Scheme}
\begin{align}
& (\mathbf{v}_h^{k+1}-\mathbf{v}_h^k, \xi) + \tau \eta^{k+1} (\mathbf{v}_h^{k+1}, \xi) + \tau \left( a(\mathbf{u}_h^{k+1},\xi) + (f(\mathbf{u}_h^{k+1}) , \xi) \right) = 0, \quad \forall \xi \in S_h, \label{eq:backward-euler-1} \\
& (\mathbf{u}_h^{k+1}-\mathbf{u}_h^k, \zeta) - \tau (\mathbf{v}_h^{k+1}, \zeta) = 0, \quad \forall \zeta \in S_h, \label{eq:backward-euler-2}
\end{align}
with initial conditions \(\mathbf{u}_h^{0} = R_h \mathbf{u}_0\) and \(\mathbf{v}_h^0 \equiv \mathbf{0}\).

\paragraph{Semi-implicit Scheme}
\begin{align}
    (\mathbf{u}_h^{k+1} - 2 \mathbf{u}_h^k + \mathbf{u}_h^{k-1}, \xi) &+ \frac{\tau\eta^{k+\frac{1}{2}}}{2}(\mathbf{u}_h^{k+1} - \mathbf{u}_h^{k-1}, \xi) \nonumber \\
    &+ \tau^2\left( a\left(\frac{\mathbf{u}_h^{k+1} + \mathbf{u}_h^{k-1}}{2}, \xi\right) + (f(\mathbf{u}_h^{k}), \xi) \right) = 0, \quad \forall \xi \in S_h, \label{eq:semi-implicit}
\end{align}
with the initial conditions \(\mathbf{u}_h^{0} = R_h \mathbf{u}_0\) and \(\mathbf{v}_h^0 \equiv \mathbf{0}\). The first step is given by
\begin{equation}
    (\mathbf{u}_h^{1}, \xi) = (\mathbf{u}_h^{0}, \xi) - \frac{\tau^2}{2}\left( a(\mathbf{u}_h^{0}, \xi) + (f(\mathbf{u}_h^{0}), \xi) \right), \quad \forall \xi \in S_h.
\end{equation}      
\paragraph{Crank-Nicolson Scheme}
\begin{align}
& (\mathbf{v}_h^{k+1}-\mathbf{v}_h^k, \xi) + \tau \eta^{k+\frac{1}{2}} (\mathbf{v}_h^{k+\frac{1}{2}}, \xi) \nonumber \\
& \qquad + \tau \left( a(\mathbf{u}_h^{k+\frac{1}{2}},\xi) + \left(\frac{F(\mathbf{u}_h^{k+1})-F(\mathbf{u}_h^k)}{\mathbf{u}_h^{k+1}-\mathbf{u}_h^k}, \xi\right) \right) = 0, \quad \forall \xi \in S_h, \label{eq:crank-nicolson-1} \\
& (\mathbf{u}_h^{k+1}-\mathbf{u}_h^k, \zeta) - \tau (\mathbf{v}_h^{k+\frac{1}{2}}, \zeta) = 0, \quad \forall \zeta \in S_h, \label{eq:crank-nicolson-2}
\end{align}
with initial conditions \(\mathbf{u}_h^{-1}=\mathbf{u}_h^{0} = R_h \mathbf{u}_0\) and \(\mathbf{v}_h^0 \equiv \mathbf{0}\).

\subsection{Gradient flow and its convex-splitting schemes}\label{appendix:gradient-flow}
We consider the following equation
\begin{align}
\begin{cases}\label{eq:gradient_flow}
\partial_{t}{\mathbf{u}}=\Delta \mathbf{u}-f(\mathbf{u}), & \text { in } [0, T] \times \Omega, \\
  \mathbf{u}(0)=\mathbf{u}_0,  & \text { in } \Omega, \\
   \mathbf{u}=0, & \text { on }[0, T] \times \partial \Omega.
\end{cases}.
\end{align}
Likewise, for the gradient flow \eqref{eq:gradient_flow}, we introduce a first-order and a second-order fully discrete convex-splitting scheme respectively as follows:

\paragraph{First-order convex-splitting scheme for gradient flow}\mbox{}\\
\begin{align}
 (\mathbf{u}_h^{k+1}-\mathbf{u}_h^k, \xi)+ \tau \left( a(\mathbf{u}_h^{k+1},\xi) + (f_c(\mathbf{u}_h^{k+1}) - f_e(\mathbf{u}_h^k), \xi) \right) = 0 \quad \forall \xi \in S_h, \label{eq:1st-CS-GF}
\end{align}
Here, $\tau > 0$ is the time step size, and the initial conditions are given by $\mathbf{u}_h^{0} = R_h \mathbf{u}_0$.
\vspace{2mm}

\paragraph{Second-order convex-splitting scheme for gradient flow}\mbox{}\\
\begin{align}
(\mathbf{u}_h^{k+1}-\mathbf{u}_h^k, \xi)  + \tau\left(a(\mathbf{u}_h^{k+\frac12},\xi) + \left(\chi(\mathbf{u}_h^{k+1}, \mathbf{u}_h^k) - f_e(\widetilde{\mathbf{u}}_h^{k-\frac12}), \xi\right)\right) = 0 \quad \forall \xi \in S_h, \label{eq:2nd-CS-GF} 
\end{align}
The initial condition is setting as $\mathbf{u}_h^{-1} =\mathbf{u}_h^{0} =  R_h \mathbf{u}_0$.

Gradient flow algorithms will check the residual of the Euler-Lagrange equation as the termination criteria.

\bibliographystyle{siamplain}
\bibliography{references}
\end{document}